\newtheorem{thrm}{Theorem}[section]
\newtheorem{lem}[thrm]{Lemma}
\newtheorem{cor}[thrm]{Corollary}
\newtheorem{prop}[thrm]{Proposition}
\newtheorem{conj}[thrm]{Conjecture}
\theoremstyle{definition}
\newtheorem{defn}[thrm]{Definition}
\newtheorem{exmple}[thrm]{Example}
\newtheorem{rmk}[thrm]{Remark}
\newtheorem{ques}[thrm]{Question}
\newtheorem{constr}[thrm]{Construction}
\newtheorem{conv}[thrm]{Convention}
\DeclareMathOperator{\Eff}{\overline{Eff}}
\DeclareMathOperator{\Chow}{Chow}
\DeclareMathOperator{\Hilb}{Hilb}
\DeclareMathOperator{\Supp}{Supp}
\DeclareMathOperator{\rchdim}{rchdim}
\DeclareMathOperator{\ratvar}{ratvar}
\DeclareMathOperator{\var}{var}
\DeclareMathOperator{\vol}{vol}
\DeclareMathOperator{\ch}{ch}
\DeclareMathOperator{\characteristic}{char}
\DeclareMathOperator{\Pic}{Pic}
\DeclareMathOperator{\alb}{alb}
\DeclareMathOperator{\Alb}{Alb}
\DeclareMathOperator{\chdim}{chdim}
\DeclareMathOperator{\rk}{rk}
\begin{document}

\title{Asymptotic behavior of the dimension of the Chow variety}
\author{Brian Lehmann}
\thanks{The author is supported by NSF Award 1004363.}
\address{Department of Mathematics, Boston College  \\
Chestnut Hill, MA \, \, 02467}

\email{lehmannb@bc.edu}

\begin{abstract} We analyze the asymptotics of the dimension of components of the Chow variety as degree increases.  By analogy with the divisor case, the main goal is to relate the asymptotic behavior with the positivity of the corresponding cycle classes.  We also compute the dimension of the Chow variety of projective space.
\end{abstract}

\maketitle

\section{Introduction}


Let $X$ be an integral projective variety over an algebraically closed field.  Fix a numerical cycle class $\alpha$ on $X$ and let $\Chow(X,\alpha)$ denote the components of $\Chow(X)$ which parametrize cycles of class $\alpha$.  The basic feature of $\Chow(X,\alpha)$ is its dimension.  However, this can be difficult to determine due to the lack of a first-order deformation theory.  Furthermore, it is unclear how this dimension interacts with other geometric features of $\alpha$.

For divisors, a well-established principle is that the geometry of $L$ is best reflected not by $\Chow(X,[L])$ but by studying the asymptotic behavior of $\Chow(X,m[L])$ as $m$ increases.  Our goal is to establish a similar principle for arbitrary cycle classes.

We first consider the behavior of the Chow variety for the most important example: projective space.  \cite{eh92} analyzes the dimension of the Chow variety of curves on $\mathbb{P}^{n}$.  Let $\ell$ denote the class of a line in $\mathbb{P}^{n}$.  Then for $d>1$,
\begin{equation*}
\dim \Chow(\mathbb{P}^{n},d \ell) = \max \left\{ 2d(n-1), \frac{d^{2} + 3d}{2} + 3(n-2) \right\}.
\end{equation*}
The first number is the dimension of the space of unions of $d$ lines on $\mathbb{P}^{n}$, and the second is the dimension of the space of degree $d$ planar curves.  Note the basic dichotomy: in low degrees the maximal dimension is achieved by unions of linear spaces, while for sufficiently high degrees the maximal dimension is achieved by ``maximally degenerate'' irreducible curves.

We first prove the analogous statement in arbitrary dimension, establishing a conjecture of \cite{eh92}:
\begin{thrm} \label{firstthm}
Let $\alpha$ denote the class of the $k$-plane on $\mathbb{P}^{n}$.  Then for $d>1$ the dimension of $\Chow(\mathbb{P}^{n},d \alpha)$ is
\begin{align*}
\max \left\{ d(k+1)(n-k), \left( \begin{array}{c} d+k+1 \\ k+1 \end{array} \right) - 1 + (k+2)(n-k-1) \right\}.
\end{align*}
\end{thrm}
The first number is the dimension of the space of unions of $d$ $k$-planes on $\mathbb{P}^{n}$ and the second is the dimension of the space of degree $d$ hypersurfaces in $(k+1)$-planes.  The same dichotomy seen for curves also holds in higher dimensions.

The basic tool in \cite{eh92} is Castelnuovo theory for curves; however, it is unclear how best to formulate an analogue for higher dimension cycles.  Instead, we reduce the general case to the calculation for curves by taking hyperplane sections.  The key step is to bound the dimension of the fibers of the restriction map $\Chow(\mathbb{P}^{n}) \dashrightarrow \Chow(H)$.  Conceptually speaking, this approach is loosely analogous to using the LES of sheaf cohomology to understand the behavior of divisors under restriction.

We now return to the setting where $X$ is an arbitrary variety and $\alpha$ is a numerical cycle class on $X$.  Our goal is to relate the asymptotic behavior of $\Chow(X,m\alpha)$ with the geometry of $\alpha$.  As mentioned before, this relationship is one of the foundational tools of modern divisor theory.  Given a Cartier divisor $L$ on a smooth variety $X$ of dimension $n$, one defines the volume of $L$ to be
\begin{equation*}
\vol(L) := \limsup_{m \to \infty} \frac{\dim H^{0}(X,\mathcal{O}_{X}(mL))}{m^{n}/n!}.
\end{equation*}
This well-studied invariant is an important measurement of the positivity of $L$.  In particular, divisors with positive volume are precisely those whose numerical class lies in the interior of the pseudo-effective cone. 
(Note that while the linear series $|mL|$ differs from the universal family over $\Chow(X,[L])$, the discrepancy in dimension is controlled by the Picard variety and thus drops out in the asymptotic calculation; see Example \ref{variationofdivisors}.)

To understand the analogous definition for cycle classes, we first need to estimate the growth rate of $\dim \Chow(X,m\alpha)$.  The right intuition is to imagine that $m\alpha$ is a divisor class on a fixed $(k+1)$-dimensional subvariety of $X$: Theorem \ref{familydim} shows that as $m$ increases the dimension of $\Chow(X,m\alpha)$ is bounded above by $Cm^{k+1}$ for some constant $C$.  The variation function identifies the best possible constant $C$.

\begin{defn}
Let $X$ be an integral projective variety and suppose $\alpha \in N_{k}(X)_{\mathbb{Z}}$ for $0 \leq k < \dim X$.  The variation of $\alpha$ is
\begin{equation*}
\var(\alpha) = \limsup_{m \to \infty} \frac{ \dim \Chow(X,m\alpha)}{m^{k+1}/(k+1)!}.
\end{equation*}
\end{defn}

It turns out that $\var$ is homogeneous and so extends naturally to $\mathbb{Q}$-numerical classes.  In fact, Theorem \ref{bigvarcont} shows that $\var$ extends to a continuous function on the interior of $\Eff_{k}(X)$.

\begin{exmple} \label{curvesinpn}
Using Theorem \ref{firstthm}, we see that for the hyperplane class $\alpha \in N_{k}(\mathbb{P}^{n})$ we have $\var(\alpha) = 1$.
\end{exmple}

\begin{exmple} \label{p3blowup}
Let $X$ be the blow-up of $\mathbb{P}^{3}$ at a closed point.  Let $E$ denote the exceptional divisor and let $\alpha$ be the class of a line in $E$.  For any positive integer $m$, every effective cycle of class $m\alpha$ is contained in $E$.  Thus, the variation coincides with the variation of the line class $\ell$ on $\mathbb{P}^{2}$, showing that that $\var(\alpha)=1$.
\end{exmple}

The previous examples are typical: components of $\Chow(X)$ with large dimension tend to parametrize cycles that are as ``degenerate'' as possible.

To state our main theorem, we will need to recall some notions concerning the positivity of numerical cycle classes.  Let $N_{k}(X)$ denote the vector space of numerical classes of $k$-cycles on $X$ with $\mathbb{R}$-coefficients.  The pseudo-effective cone $\Eff_{k}(X) \subset N_{k}(X)$ is defined to be the closure of the cone generated by all effective $k$-cycles.  Classes that lie in the interior of the cone are known as big classes.

Note that, in contrast to the volume, even if $\alpha$ has positive variation it need not be a big class (see  Example \ref{p3blowup}).  In fact, the pushforward of a big class from a subvariety $V$ of $X$ will always have positive variation.  Our main theorem shows that this is essentially the only way to construct classes of positive variation -- variation is a measure of positivity along subvarieties.

\begin{thrm} \label{mainthrm}
Let $X$ be an integral projective variety and suppose $\alpha \in N_{k}(X)_{\mathbb{Q}}$ for $0 \leq k < \dim X$.  Then $\var(\alpha) > 0$ if and only if there is a morphism $f: Y \to X$ from an integral projective variety of dimension $k+1$ that is generically finite onto its image and a big class $\beta \in N_{k}(Y)_{\mathbb{Q}}$ such that some multiple of $\alpha - f_{*}\beta$ is represented by an effective cycle.
\end{thrm}

\begin{rmk}
As suggested by Theorem \ref{mainthrm}, there are two fundamentally different approaches to generalizing the volume function for divisors.  First, one can focus on ``deformations.''  The variation function captures this feature of cycles and is closely related to a number of classical results concerning cycles with large deformation spaces.  However, variation fails to characterize global positivity in the same as the volume function for divisors. Second, one can focus on ``positivity.''  The mobility function defined in \cite{lehmann13} shares many of the desirable geometric features of the volume function; in particular, a numerical class has positive mobility precisely when it is big.  However, the mobility is more difficult to work with explicitly and does not have any direct connection with the Chow variety.
\end{rmk}

There are two main conceptual advances used in the proof of Theorem \ref{mainthrm}.  The first is an analysis of restriction maps.  A key step in the proof of Theorem \ref{mainthrm} is using an a priori bound on asymptotic growth to find a family of cycles whose restriction to a hyperplane $H$ coincides.  Again this replaces the use of the LES of sheaf cohomology for divisors.

The second is a new characterization of bigness.  \cite[Theorem 2.4]{8authors} shows that a curve class $\alpha \in N_{1}(X)_{\mathbb{Z}}$ is big if and only if any two points of $X$ can be connected via a chain of effective cycles with numerical classes proportional to $\alpha$.  For cycles of higher dimension, the analogue is to consider chains of effective cycles that intersect in codimension $1$ along ``positive'' subvarieties.  The following definition encodes a strong version of this property.

\begin{defn}
Let $X$ be an integral projective variety of dimension $n$.  Suppose that $W$ is an integral variety and $U \subset W \times X$ is a family of effective $k$-cycles.  Denote the projection maps by $p: U \to W$ and $s: U \to X$.  We say that the family is strongly big-connecting if $s: U \to X$ is dominant and there is a big effective Cartier divisor $B$ on $X$ such that every $p$-horizontal component of $s^{*}B$ is contracted by $s$ to a subvariety of $X$ of dimension at most $k-1$.
\end{defn}

The strongly big-connecting property forces our cycles to intersect in codimension $1$ along a fixed ``positive'' $(k-1)$-dimensional subvariety (controlled by the divisor $B$).  It also implies that two general points in $X$ can be connected by a length-two chain of members of the family.  A typical example is given by fixing a $(n-k+1)$-dimensional subspace $V \subset H^{0}(X,\mathcal{O}_{X}(A))$ for a very ample divisor $A$ and taking complete intersections of $(n-k)$ general elements of $V$.  This family is strongly big-connecting for any divisor $A \in |V|$:  the intersection of $A$ with a general element of our family is contained in the base locus of $V$ which has dimension $k-1$.  The study of connecting chains leads to the following characterization of bigness for cycle classes.

\begin{thrm} \label{mainthrm2}
Let $X$ be an integral projective variety and let $\alpha \in N_{k}(X)$ for $0 \leq k < \dim X$.  Then $\alpha$ is big if and only if there is some strongly big-connecting family of $k$-cycles of class $\beta$ and a positive constant $C>0$ such that $\alpha - C\beta$ is pseudo-effective.
\end{thrm}

\begin{rmk}
\cite{voisin10} conjectures that bigness of a class $\alpha$ can be characterized using the tangency behavior of cycles representing $\alpha$ and shows that this conjecture has interesting ties to the Hodge theory of complete intersections in projective space.  Theorem \ref{mainthrm2} can be viewed as a step in this direction.
\end{rmk}

\subsection{Organization}

Section \ref{prelimsection} reviews background material on cycles.  Section \ref{cyclefamilysection} describes several geometric constructions for families of cycles.  In Section \ref{chowdimpnsec} we bound the dimension of components of $\Chow(\mathbb{P}^{n})$.  Finally, section \ref{variationsection} introduces the variation function and proves its basic geometric properties.

\subsection{Acknowledgements}

My sincere thanks to A.M.~Fulger for numerous discussions and for his suggestions.  Thanks to C.~Voisin for recommending a number of improvements on an earlier draft.  Thanks also to B.~Hassett and R.~Lazarsfeld for helpful conversations and to Z.~Zhu and X.~Zhao for reading a draft of the paper.

\section{Preliminaries} \label{prelimsection}

Throughout we work over a fixed algebraically closed field $K$.  A variety will mean a quasiprojective scheme of finite type over $K$ (which may be reducible and non-reduced).

\begin{lem} \label{dimensionlemma}
Let $X$ be an irreducible variety.  Suppose that $f: X \dashrightarrow Y$ is a rational map to a variety $Y$ and $g: X \dashrightarrow Z$ is a rational map to a variety $Z$.  Let $F$ be a general fiber of $f$ over a closed point of $Y$ (so in particular $F$ is not contained in the locus where $g$ is not defined).  Then $\dim(\overline{g(X)}) \leq \dim(\overline{g(F)}) + \dim Y$.
\end{lem}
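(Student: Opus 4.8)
The plan is to linearize the two rational maps into a single honest morphism by passing to the graph, and then read the desired inequality off from the theorem on the dimension of fibers. First I would replace $X$ by the dense open subset $U \subseteq X$ on which both $f$ and $g$ are defined; since $U$ is dense and $X$ is irreducible, this changes neither $\overline{g(X)}$ nor a general fiber of $f$, and it guarantees $g$ is defined along a general fiber (the hypothesis that $F$ avoids the indeterminacy locus of $g$). Thus it suffices to treat $f$ and $g$ as morphisms on $U$. I then form $h = (f,g) \colon U \to Y \times Z$ and set $V = \overline{h(U)}$, an irreducible closed subvariety of $Y \times Z$. Writing $\pi_Y$ and $\pi_Z$ for the projections restricted to $V$, I have $\pi_Z(V) = \overline{g(U)} = \overline{g(X)}$ and $\pi_Y(V) = \overline{f(U)}$, so in particular $\dim \overline{g(X)} \le \dim V$ and $\dim \pi_Y(V) \le \dim Y$.

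The morphism $\pi_Y \colon V \to \overline{f(U)}$ is dominant between irreducible varieties, so by the theorem on the dimension of fibers its general closed fiber $\pi_Y^{-1}(y)$ has dimension exactly $\dim V - \dim \overline{f(U)}$. Combining this with the two inequalities above, the lemma will follow once I show that for general $y$ this fiber dimension is at most $\dim \overline{g(F)}$, where $F = f^{-1}(y) \cap U$ is the corresponding general fiber of $f$.

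The heart of the argument, and the step I expect to require the most care, is comparing $\pi_Y^{-1}(y)$ with $\overline{g(F)}$ for general $y$. The containment $\{y\} \times \overline{g(F)} \subseteq \pi_Y^{-1}(y)$ is immediate but points the wrong way; the genuine issue is that the fiber of the \emph{closure} $V = \overline{h(U)}$ over $y$ could a priori acquire extra points arising as limits of $h(x)$ with $f(x)$ merely approaching $y$. To rule this out dimensionally I would use that $h(U)$ is constructible and dense in $V$, hence contains a dense open subset $O \subseteq V$. A standard dimension count handles the complement: $V \setminus O$ has dimension strictly less than $\dim V$, so its general fiber over $\overline{f(U)}$ has dimension strictly less than that of the general fiber of $\pi_Y$, whence $O \cap \pi_Y^{-1}(y)$ is nonempty of full dimension $\dim V - \dim \overline{f(U)}$ for general $y$. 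Since $O \cap \pi_Y^{-1}(y) \subseteq h(U) \cap (\{y\} \times Z) = \{y\} \times g(F)$, this yields $\dim \pi_Y^{-1}(y) \le \dim \overline{g(F)}$. Feeding this back in gives $\dim \overline{g(X)} \le \dim V = \dim \overline{f(U)} + \dim \pi_Y^{-1}(y) \le \dim Y + \dim \overline{g(F)}$, which is the claimed bound.
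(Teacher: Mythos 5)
Your proposal is correct and takes essentially the same route as the paper: both pass to the graph map $h=(f,g)\colon U \to Y\times Z$ and read the inequality off the theorem on fiber dimensions applied to the first projection of $\overline{h(U)}$. The paper's terse assertion that, for $F$ general, $h(F\cap U)$ is dense in the corresponding fiber of $\overline{h(U)}$ is precisely what your constructibility and dimension-count argument establishes, so you have simply made the paper's key step explicit.
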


\begin{proof}
Let $U$ be an open subset of $X$ where both $f$ and $g$ are defined and let $h: U \to Y \times Z$ be the induced map.  Since $F$ is a general fiber, $h(F \cap U)$ is dense in its closure in $Y \times Z$.  By considering the first projection, we see that $\dim(\overline{h(U)}) \leq \dim(\overline{g(F \cap U)}) + \dim Y$.
\end{proof}

We will often use the following special case of \cite[Th\'eor\`eme 5.2.2]{gr71}.

\begin{thrm}[\cite{gr71}, Th\'eor\`eme 5.2.2]
Let $f: X \to S$ be a projective morphism of varieties such that some component of $X$ dominates $S$.  There is a birational morphism $\pi: S' \to S$ such that the morphism $f': X' \to S'$ is flat, where $X' \subset X \times_{S} S'$ is the closed subscheme defined by the ideal of sections whose support does not dominate $S'$.
\end{thrm}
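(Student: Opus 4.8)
The plan is to prove this flattening statement by combining the flattening stratification with the universal property of blowing up, which is the strategy of \emph{platification par \'eclatement}. Since the conclusion only asks for a birational $\pi$, and since $X'$ is built from the sections dominating $S'$, I may restrict attention to the component of $X$ dominating $S$ and assume $S$ is integral. Pulling back along the relatively very ample bundle attached to the projective morphism $f$, I embed $X$ as a closed subscheme of $\mathbb{P}^N_S$ for some $N$. Each fiber $X_s$ then carries a Hilbert polynomial, and by generic flatness there is a dense open $U \subseteq S$ over which $f$ is flat with a single Hilbert polynomial $P_0$; the first task is to modify $S$ so that this generic behavior spreads out over all of $S'$.

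The key tool is the interaction of flatness with the pushforwards $\mathcal{G}_m := f_* \mathcal{O}_X(m)$ for $m \gg 0$. By cohomology and base change, $f$ is flat over a point $s \in S$ precisely when the coherent sheaves $\mathcal{G}_m$ are locally free near $s$ of the generic rank $P_0(m)$ for all large $m$; moreover Mumford's uniform regularity bound lets me replace ``all large $m$'' by a single explicit finite set of twists, so that flatness is governed by finitely many coherent sheaves on $S$. Making such a sheaf $\mathcal{G}$ locally free is exactly what a blow-up accomplishes: after blowing up a suitable Fitting ideal of $\mathcal{G}$ the pullback of $\mathcal{G}$, modulo its torsion subsheaf, becomes locally free, since the blow-up turns the relevant ideal sheaf into an invertible one. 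This mechanism is characteristic-free, which is essential since we work over an arbitrary algebraically closed field.

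I would then build $\pi: S' \to S$ by successively blowing up the Fitting ideals of the sheaves $\mathcal{G}_m$ for the chosen finite set of twists, running a Noetherian induction on the dimension of the locus where flatness fails: each such blow-up is an isomorphism over the dense flat locus $U$, hence birational, and strictly shrinks the non-flat locus, so the process terminates with a birational $S'$ over which all the relevant pushforwards are locally free. On $S'$ the strict transform --- the closed subscheme obtained by discarding the sections of $\mathcal{O}_{X \times_S S'}$ whose support fails to dominate $S'$, i.e.\ killing the $S'$-torsion --- is then flat over $S'$, and this subscheme is precisely the $X'$ of the statement. The main obstacle is the uniformity: one must guarantee that a single birational modification works for all twists at once and, more delicately, that local freeness of the pushforwards is genuinely equivalent to flatness of the strict transform $X'$ rather than of all of $X \times_S S'$. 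Controlling the passage from ``pushforwards locally free'' to ``$X'$ flat,'' while simultaneously identifying $X'$ with the torsion-free strict transform, is the technical heart of the argument and is where the full force of the Raynaud--Gruson methods is needed.
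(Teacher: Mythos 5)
The paper does not prove this statement at all: it is quoted from Raynaud--Gruson \cite{gr71} and used as a black box (e.g.\ in Lemma \ref{goodfamilymodification}), so your blind proof has to stand entirely on its own --- and it does not, for a reason you yourself identify. At the decisive step you write that passing from ``pushforwards locally free'' to ``$X'$ flat'' is ``where the full force of the Raynaud--Gruson methods is needed.'' Since the statement being proved \emph{is} Raynaud--Gruson's theorem, this is circular; what remains is an outline whose central step is missing. The gap is concrete: your blow-ups of Fitting ideals make the sheaves $\pi^{*}\mathcal{G}_{m}/(\mathrm{torsion})$ locally free, where $\mathcal{G}_{m} = f_{*}\mathcal{O}_{X}(m)$, but pushforward does not commute with the base change $\pi$ --- that failure is precisely what non-flatness of $X$ over $S$ means. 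One only has a natural map $\pi^{*}\mathcal{G}_{m} \to f'_{*}\mathcal{O}_{X'}(m)$, and since $\mathcal{O}_{X'}(m)$ has no nonzero sections whose support fails to dominate $S'$, this factors through an injection $\pi^{*}\mathcal{G}_{m}/(\mathrm{torsion}) \hookrightarrow f'_{*}\mathcal{O}_{X'}(m)$ that is an isomorphism over the flat locus but need not be surjective elsewhere. The flatness criterion you invoke requires local freeness of $f'_{*}\mathcal{O}_{X'}(m)$ itself for all large $m$, not of a generically isomorphic locally free subsheaf of it, so nothing you construct is shown to make $X'$ flat.

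Two subsidiary claims would also need work even if the main gap were filled: (a) the uniform regularity bound that reduces flatness to finitely many twists presupposes that only finitely many Hilbert polynomials occur among \emph{all} fibers, a flattening-stratification-type fact you never establish; and (b) the engine of your Noetherian induction --- that each blow-up ``strictly shrinks the non-flat locus'' --- is not obvious, because flatness can fail at new points of the exceptional divisor, so termination requires an actual invariant. (Also, ``assume $S$ integral'' skips the non-reduced case, which the paper's notion of variety allows, though the domination hypothesis does at least force $S$ irreducible.) Finally, note that the statement only demands a \emph{birational} $\pi$, not a blow-up, and that $f$ is projective; this permits a genuinely elementary route that sidesteps the pushforward problem entirely: over a dense open $U \subseteq S$ where the strict transform is flat with Hilbert polynomial $P_{0}$, take the classifying morphism $U \to \Hilb(\mathbb{P}^{N})$, let $S'$ be the closure of its graph in $S \times \Hilb(\mathbb{P}^{N})$, and let $X'$ be the pullback of the universal family; one then checks that this $X'$ is the scheme-theoretic closure of its restriction over $\pi^{-1}(U)$ and hence coincides with the subscheme described in the statement. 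Completing that identification is a finite, doable task, unlike an appeal to ``Raynaud--Gruson methods.''
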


\subsection{Cycles}
Suppose that $X$ is a projective variety.  A $k$-cycle on $X$ is a finite formal sum $\sum a_{i} V_{i}$ where the $a_{i}$ are integers and each $V_{i}$ is an integral closed subvariety of $X$ of dimension $k$.  The cycle is said to be effective if each $a_{i} \geq 0$.  The group of $k$-cycles is denoted $Z_{k}(X)$ and the group of $k$-cycles up to rational equivalence is denoted $A_{k}(X)$.  We will follow the conventions of \cite{fulton84} in the use of various intersection products on $A_{k}(X)$.

\cite[Chapter 19]{fulton84} defines a $k$-cycle on $X$ to be numerically trivial if its rational equivalence class has vanishing intersection with every weighted homogeneous degree-$k$ polynomial in Chern classes of vector bundles on $X$.  By quotienting $Z_{k}(X)$ by numerically trivial cycles, we obtain an abelian group $N_{k}(X)_{\mathbb{Z}}$ which is finitely generated by \cite[Example 19.1.4]{fulton84}.

We also define
\begin{align*}
N_{k}(X)_{\mathbb{Q}} & := N_{k}(X)_{\mathbb{Z}} \otimes \mathbb{Q} \\
N_{k}(X) & := N_{k}(X)_{\mathbb{Z}} \otimes \mathbb{R}
\end{align*}

Suppose that $f: Z \to X$ is an l.c.i.~morphism of codimension $d$.  Then \cite[Example 19.2.3]{fulton84} shows that the Gysin homomorphism $f^{*}: A_{k}(X) \to A_{k-d}(Z)$ descends to numerical equivalence classes.  We will often use this fact when $Z$ is a Cartier divisor on $X$ to obtain maps $f^{*}: N_{k}(X) \to N_{k-1}(Z)$.

\begin{conv} \label{dimconv} When we discuss $k$-cycles on an integral projective variety $X$, we will always implicitly assume that $0 \leq k < \dim X$.  This allows us to focus on the interesting range of behaviors without repeating hypotheses.
\end{conv}

For a cycle $Z$ on $X$, we let $[Z]$ denote the numerical class of $Z$, which can be naturally thought of as an element in $N_{k}(X)_{\mathbb{Z}}$, $N_{k}(X)_{\mathbb{Q}}$, or $N_{k}(X)$.  If $\alpha$ is the class of an effective cycle $Z$, we say that $\alpha$ is an effective class.  We write $\alpha \preceq \beta$ if the difference $\beta - \alpha$ is an effective class.

\begin{defn}
Let $X$ be a projective variety.  The pseudo-effective cone $\Eff_{k}(X) \subset N_{k}(X)$ is the closure of the cone generated by all classes of effective $k$-cycles.  The big cone is the interior of the pseudo-effective cone.
\end{defn}

\cite[Theorem 0.2]{fl13} shows that $\Eff_{k}(X)$ is a full-dimensional salient cone.  For any morphism of integral projective varieties $f: X \to Y$, there is a pushforward map $f_{*}: N_{k}(X) \to N_{k}(Y)$.  \cite[Corollary 3.22]{fl13} shows that when $f$ is surjective there is an induced equality $f_{*}(\Eff_{k}(X)) = \Eff_{k}(Y)$.

\begin{conv}
Let $B$ be a Cartier divisor on an equidimensional projective variety $X$ of dimension $n$.  We say that $B$ is big if for each reduced component $X_{i}$ of $X$ we have that $h^{0}(X_{i},mB|_{X_{i}}) > \lfloor Cm^{n} \rfloor$ for some positive constant $C$.  This implies that the corresponding Weil divisor has big class.  With this definition bigness of a Cartier divisor is preserved by generically-finite pullback.
\end{conv}

We will also need a basic result concerning pseudo-effective cycles.

\begin{lem}[\cite{lehmann13}, Lemma 2.5] \label{intlem}  
Let $X$ be a projective variety and let $D$ be the support of an effective big $j$-cycle $Z$ with injection $i: D \to X$.  If $\alpha \in N_{k}(D)$ is big (for $0 \leq k \leq j$) then $i_{*}\alpha \in N_{k}(X)$ is big.
\end{lem}


\section{Families of cycles} \label{cyclefamilysection}

In this section we set up a framework for discussing families of cycles.  This framework was already discussed in \cite[Section 3]{lehmann13}; however, the explanation here is more thorough and espouses a slightly different viewpoint.

Although there are several different notions of a family of cycles in the literature, the theory we will develop is somewhat insensitive to the precise choices.  It will be most convenient to use a simple geometric definition.

\begin{defn}  \label{familydef}
Let $X$ be a projective variety.  A family of $k$-cycles on $X$ consists of an integral variety $W$, a reduced closed subscheme $U \subset W \times X$, and an integer $a_{i}$ for each component $U_{i}$ of $U$, such that for each component $U_{i}$ of $U$ the first projection map $p: U_{i} \to W$ is flat dominant of relative dimension $k$.  If each $a_{i} \geq 0$ we say that we have a family of effective cycles.  We say that $\sum a_{i}U_{i}$ is the cycle underlying the family.  

In this situation $p: U \to W$ will denote the first projection map and $s: U \to X$ will denote the second projection map unless otherwise specified.  We will usually denote a family of $k$-cycles using the notation $p: U \to W$, with the rest of the data implicit.

For a closed point $w \in W$, the base change $w \times_{W} U_{i}$ is a subscheme of $X$ of pure dimension $k$ and thus defines a fundamental $k$-cycle $Z_{i}$ on $X$.  The cycle-theoretic fiber of $p: U \to W$ over $w$ is defined to be the cycle $\sum a_{i}Z_{i}$ on $X$.  We will also call these cycles the members of the family $p$.
\end{defn}

\begin{defn}
Let $X$ be a projective variety.  We say that a family of $k$-cycles $p: U \to W$ on $X$ is a rational family if every cycle-theoretic fiber lies in the same rational equivalence class.
\end{defn}

\begin{rmk}
Note that many intuitive constructions of families of cycles fail to meet the criteria: the map $\mathbb{A}^{2} \times \mathbb{A}^{2} \to \mathrm{Sym}^{2}\mathbb{A}^{2}$ is not flat over a characteristic $2$ field as pointed out in \cite{kollar96}.  Since we are primarily interested in the ``generic'' behavior of families of cycles, these shortcomings will not be important for us.
\end{rmk}

The following constructions show how to construct families of cycles from subsets $U \subset W \times X$.

\begin{constr}[Cycle version] \label{cycletofamilyconstr}
Let $X$ be a projective variety and let $W$ be an integral variety.  Suppose that $Z = \sum a_{i}V_{i}$ is a $(k+\dim W)$-cycle on $W \times X$ such that the first projection maps each $V_{i}$ dominantly onto $W$.  Let $W^{0} \subset W$ be the (non-empty) open locus over which every projection $p: V_{i} \to W$ is flat and let $U \subset \Supp(Z)$ denote the preimage of $W^{0}$.  Then the map $p: U \to W^{0}$ defines a family of cycles where we assign the coefficient $a_{i}$ to the component $V_{i} \cap U$ of $U$.
\end{constr}

\begin{constr}[Subscheme version] \label{equidimsubschemeconstr}
Suppose that $Y$ is a reduced variety and that $X$ is a projective variety.  Let $\tilde{U} \subset Y \times X$ be a closed subscheme such that the fibers of the projection $p: \tilde{U} \to Y$ are equidimensional of dimension $k$.  There is a natural way to construct a finite collection of families of effective cycles associated to the subscheme $\tilde{U}$.

Consider the image $p(\tilde{U})$ (with its reduced induced structure).  Let $\{ \tilde{W}_{j} \}$ denote the irreducible components of $p(\tilde{U})$.  For each there is a non-empty open subset $W_{j} \subset \tilde{W}_{j}$ such that the restriction of $p$ to each component of $p^{-1}(W_{j})_{red}$ is flat.  Since furthermore $p$ has equidimensional fibers, we obtain a family of effective $k$-cycles $p_{j}: U_{j} \to W_{j}$ where $U_{j} = p^{-1}(W_{j})_{red}$ and we assign coefficients so that the cycle underlying the family $p_{j}$ coincides with the fundamental cycle of $p^{-1}(W_{j})$.  We can then replace $\tilde{U}$ by the closed subscheme obtained by taking the base change to $p(\tilde{U}) - \cup_{j} W_{j}$ and repeat.  The end result is a collection of families $p_{i}: U_{i} \to W_{i}$ parametrizing the cycles contained in $\tilde{U}$.

If $p(\tilde{U})$ is irreducible and we are interested only in the generic behavior of the cycles in $\tilde{U}$, we can stop after the first step to obtain a single family of cycles.
\end{constr}

\subsection{Chow varieties and the Chow map}

Fix a projective variety $X$ and an ample divisor $H$ on $X$.  For any reduced scheme $Z$ over the ground field, \cite[Chapter I.3]{kollar96} introduces a more refined definition of a family of $k$-cycles of $X$ of $H$-degree $d$ over $Z$.  Koll\'ar then constructs a semi-normal projective variety $\Chow_{k,d,H}(X)$ that parametrizes families of effective $k$-cycles of $H$-degree $d$.  $\Chow(X)$ denotes the disjoint union over all $k$ and $d$ of $\Chow_{k,d,H}(X)$ for some fixed ample divisor $H$; it does not depend on the choice of $H$.

The precise way in which $\Chow(X)$ parametrizes cycles is somewhat subtle in characteristic $p$.  For a discussion of the Chow functor and universal families, see \cite{kollar96}.  We will need the following properties of $\Chow(X)$:
\begin{itemize}
\item Any family of cycles in the sense of Definition \ref{familydef} naturally yields a family of cycles in the refined sense of \cite[I.3.11 Definition]{kollar96} by applying \cite[I.3.14 Lemma]{kollar96} with the identity map (see also \cite[I.3.15 Corollary]{kollar96}).  
\item For any weakly normal integral variety $W$ and any (refined) family of effective cycles $p: U \to W$, there is an induced morphism $\ch_{p}: W \to \Chow(X)$ by \cite[I.4.8-I.4.10]{kollar96}.  (We will denote this map simply by $\ch$ when the family $p$ is clear from the context.)
\end{itemize}

For any family of effective cycles $p: U \to W$ the base change to the normal locus $W^{0} \subset W$ is still a family of cycles (where we assign the same coefficients).  Thus there is an induced rational map $\ch_{p}: W \dashrightarrow \Chow(X)$ that is a morphism on the normal locus of $W$.

The following crucial lemma encapsulates the set-theoretical nature of the Chow functors constructed in \cite[Chapter I.3]{kollar96}.

\begin{lem} \label{injectivechowlemma}
Let $X$ be a projective variety and let $p: U \to W$ be a family of effective $k$-cycles on $X$ over a weakly normal $W$.  A curve $C \subset W$ is contracted by $\ch: W \to \Chow(X)$ if and only if every cycle-theoretic fiber over $C$ has the same support.
\end{lem}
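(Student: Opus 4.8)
The plan is to prove both implications by relating the Chow point of a member of the family to the honest cycle-theoretic fiber of Definition \ref{familydef}, exploiting that $\Chow(X)$ parametrizes cycles together with the flatness built into our families. Throughout I treat $C$ as irreducible (a reducible curve is handled component by component, the common value on the components being forced to agree at their intersection points).

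For the forward implication, suppose $\ch$ contracts $C$ to a single point $x_{0} \in \Chow(X)$. Since $\ch$ sends a point $w$ to the class of the cycle-theoretic fiber over $w$, and since the support of a cycle is recovered from its Chow point (for instance as the image in $X$ of the corresponding fiber of the universal family over $\Chow(X)$), every cycle-theoretic fiber over $C$ must have the same support, namely the one determined by $x_{0}$. This direction is immediate and insensitive to the characteristic, because only the support -- not the multiplicities -- attached to the Chow point is used.

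The content is in the converse. Assume every cycle-theoretic fiber over $C$ has the same support $V$, and write $V = \bigcup_{j} V_{j}$ for its $k$-dimensional components. I work directly with the map $c \mapsto \ch(c)$, which sends $c \in C$ to the class of the cycle-theoretic fiber $Z_{c} = \sum_{i} a_{i}[(U_{i})_{c}]$; it suffices to show that $Z_{c}$ is the same cycle for every closed point $c \in C$. Fix an ample divisor $H$. First, because each $p \colon U_{i} \to W$ is flat of relative dimension $k$, the Hilbert polynomial of the fiber $(U_{i})_{c}$ with respect to $H$ is constant along the irreducible curve $C$; hence each $\deg_{H}[(U_{i})_{c}]$, and therefore $\deg_{H} Z_{c} = \sum_{i} a_{i}\deg_{H}[(U_{i})_{c}]$, is independent of $c$. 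Writing $Z_{c} = \sum_{j} M_{j}(c)[V_{j}]$, this says that $\sum_{j} M_{j}(c)\deg_{H} V_{j}$ is constant in $c$, with the subvarieties $V_{j}$ and their degrees $\deg_{H} V_{j}$ fixed. Second, each $M_{j}(c)$ -- a nonnegative integer combination of the lengths of the fibers $(U_{i})_{c}$ along $V_{j}$ -- is upper semicontinuous in $c$, since lengths of fibers of a flat family along a fixed component can only jump up under specialization. Consequently the generic point of $C$ realizes the minimum of each $M_{j}$, and a fixed positive-weight combination of upper semicontinuous functions is constant only if each $M_{j}(c)$ is itself constant. Thus $Z_{c} = \sum_{j} M_{j}[V_{j}]$ with the $M_{j}$ independent of $c$, so $\ch$ contracts $C$.

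I expect the main obstacle to be establishing the upper semicontinuity of $M_{j}(c)$ cleanly enough to combine it with the constancy of the degree. Concretely, one localizes $\mathcal{O}_{U_{i}}$ at the generic point of $V_{j}$ to reduce to a relative dimension-zero situation, where the length of the fiber is upper semicontinuous by the standard semicontinuity theorems for flat families; the hypothesis $V_{j} \subset \Supp(Z_{c})$ for all $c$ is exactly what guarantees each $M_{j}(c) > 0$ so that the minimization argument applies to every component. A secondary point worth care is that one need not track how $\Chow(X)$ encodes multiplicities in characteristic $p$: both directions are arranged so that only equality of the honest cycle-theoretic fibers is invoked (support in the forward direction, support-and-multiplicity in the converse), and a genuinely constant family of cycles maps to a single point of $\Chow(X)$ by construction.
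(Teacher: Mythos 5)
Your converse-direction reduction --- flatness makes each $H$-degree $\deg_{H}[(U_i)_c]$ constant along an irreducible curve, the specialization formula for cycles of flat families gives upper semicontinuity of each multiplicity $M_j(c)$, and the two together force the cycle-theoretic fibers over $C$ to be literally equal as cycles --- is correct in any characteristic, and it is a genuinely different observation from anything in the paper's proof. The gap is that both of your directions then bottom out in assertions about $\ch$ that you take ``by construction'': that $\ch(w)$ is determined by the cycle-theoretic fiber over $w$ (so equal cycles give equal Chow points), and that conversely the support of that fiber can be recovered from $\ch(w)$. Over a field of characteristic $0$ these are legitimate, since $\Chow(X)$ carries a universal family and its closed points correspond bijectively to effective cycles. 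But the paper works over an arbitrary algebraically closed field, and in characteristic $p$ these two assertions are precisely the ``subtle'' behavior of Kollár's construction that this lemma is meant to encapsulate: there is no universal family over $\Chow(X)$ in positive characteristic (the paper can only invoke \cite[I.4.14 Theorem]{kollar96}, which produces a family whose Chow map is dominant, not a universal one), so your parenthetical justification for recovering supports fails; and $\ch$ is built from the family via relative Chow forms rather than defined pointwise, so it is not ``by construction'' that $\ch(w)$ depends only on the fiber cycle --- the known characteristic-$p$ pathologies, where inseparability multiplies the multiplicities of Chow forms by $p$-powers, are exactly failures of this sort of pointwise compatibility.

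This is why the paper's own proof never argues pointwise. For ``same support $\Rightarrow$ contracted'' it pushes the support hypothesis through Kollár's construction (the discussion following Eq.~(3.23.1.5) in \cite{kollar96}) to the Cartier divisors on $(\mathbb{P}^{\vee})^{k+1}$ parametrized by the image of $C$ in $\mathbb{H}$, and then uses connectedness of that image: effective divisors of fixed degree supported on a fixed divisor form a finite set, so a connected family of them is constant. Note this argument survives even if the Chow form of an individual fiber were off by a $p$-power factor, whereas your step ``equal cycles $\Rightarrow$ equal Chow points'' would not. For ``contracted $\Rightarrow$ same support'' it invokes \cite[I.3.27.3]{kollar96} to pass to the Hilbert scheme of $(\mathbb{P}^{\vee})^{k+1}$, where contraction means the Chow divisors literally coincide, and then reads off the supports of the cycles from Eq.~(3.23.1.5). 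To repair your proof you would have to substantiate your two ``by construction'' claims by the same unwinding of \cite{kollar96}; once that is done, your (correct) strengthening from ``same support'' to ``same cycle'' is no longer needed, because the paper obtains the required rigidity on the divisor side from connectedness instead.
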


We will freely use the notation of \cite{kollar96} in the verification.

\begin{proof}
First suppose that every cycle-theoretic fiber of $p$ over $C$ has the same support.  Since $\Chow_{k,d,H}(X)$ is constructed by taking a semi-normalization (which is set-theoretically bijective), we may instead consider the induced map to $\Chow'_{k,d,H}(X)$.  This map factors through the map $\ch$ for a projective space containing an embedding of $X$; therefore it suffices to consider the case when $X = \mathbb{P}$.  Then the construction following \cite[Ch.~I Eq.~(3.23.1.5)]{kollar96} shows that the Cartier divisors on $(\mathbb{P}^{\vee})^{k+1}$ parametrized by the image of $C$ in $\mathbb{H}$ must all have the same support.  But this implies they are equal.

Conversely, suppose that $C$ is contracted by $\ch$.  As discussed in \cite[I.3.27.3]{kollar96}, $C$ is also contracted by the morphism to $\mathrm{Hilb}((\mathbb{P}^{\vee})^{k+1})$.  Again comparing with \cite[Ch.~I Eq.~(3.23.1.5)]{kollar96}, we see that the support of each of the cycles parametrized by $C$ is the same.
\end{proof}

It will often be helpful to replace a family $p: U \to W$ by a slightly modified version.

\begin{lem} \label{goodfamilymodification}
Let $X$ be a projective variety and let $p: U \to W$ be a family of effective cycles on $X$.  Then there is a normal projective variety $W'$ that is birational to $W$ and a family of cycles $p': U' \to W'$ such that $\ch(W') = \overline{\ch(W)}$.
\end{lem}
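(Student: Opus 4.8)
The plan is to realize $W'$ as a normalized flat limit of the given family over a projective compactification of $W$, and then to use properness to force the image of the Chow map to fill out the entire closure $\overline{\ch(W)}$. First I would choose an integral projective variety $\bar{W}$ containing $W$ as a dense open subset, and form $\bar{U} \subset \bar{W} \times X$ by taking the closure of each component $U_i$ of $U$ (retaining the coefficient $a_i$). Since each $U_i \to W$ is dominant, each component of $\bar{U}$ dominates $\bar{W}$, and its generic fiber still has dimension $k$, so $\bar{U} \to \bar{W}$ is a projective morphism some component of whose source dominates the base; thus the flattening theorem applies.

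Next I would apply \cite[Th\'eor\`eme 5.2.2]{gr71} (quoted above) to $\bar{U} \to \bar{W}$: this produces a birational morphism $\pi \colon W'' \to \bar{W}$ and a flat family $U'' \to W''$, obtained from $\bar{U} \times_{\bar{W}} W''$ by discarding the components whose support does not dominate $W''$. Because every component of $\bar{U}$ dominates $\bar{W}$, the components of $U''$ are precisely the strict transforms, each flat and dominant over $W''$; flatness together with dominance of each component forces the relative dimension to equal the generic value $k$, and the fibers to be of pure dimension $k$. I would then let $\nu \colon W' \to W''$ be the normalization, so that $W'$ is a normal projective variety birational to $\bar{W}$, hence to $W$. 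Pulling the flat family back along $\nu$ and assigning to the strict transform of $\overline{U_i}$ the coefficient $a_i$ should yield the desired family $p' \colon U' \to W'$ in the sense of Definition \ref{familydef}.

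The step I expect to be the genuine obstacle is ensuring that $U'$ really is a Definition \ref{familydef} family: its total space must be reduced while each component remains flat of relative dimension $k$ over the normal base. The flattening theorem does not guarantee that its output is reduced, so I would pass to the reduced structure and, if necessary, apply \cite[Th\'eor\`eme 5.2.2]{gr71} once more to the reduced scheme; the point is that this further modification is an isomorphism over the dense locus where the family already agrees with the original one, so no information about the generic member is lost. (Alternatively, one can feed the resulting equidimensional reduced subscheme into Construction \ref{equidimsubschemeconstr}.) Here one must check that the dominating components survive, stay flat, and keep generic fiber dimension $k$; all of this follows from generic flatness and the preservation of dominance, but this bookkeeping is the delicate part.

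Finally I would verify the Chow condition, which is then formal. Since $W'$ is normal, the induced map $\ch_{p'} \colon W' \to \Chow(X)$ is a genuine morphism. There is a dense open $W'^{\circ} \subseteq W'$ on which $\pi \circ \nu$ restricts to an isomorphism onto an open subset $W^{\circ}$ of the normal locus of $W$ and on which $U'$ restricts to $U$; over $W'^{\circ}$ the cycle-theoretic fibers of $p'$ coincide with those of $p$, so under this identification $\ch_{p'}(W'^{\circ})$ is identified with $\ch_{p}(W^{\circ})$, which is dense in $V := \overline{\ch(W)}$. Because $W'$ is proper and irreducible and $\ch_{p'}$ is a morphism, $\ch_{p'}(W')$ is an irreducible closed set equal to $\overline{\ch_{p'}(W'^{\circ})} = V$, which is exactly the assertion $\ch(W') = \overline{\ch(W)}$.
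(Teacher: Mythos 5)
Your overall strategy is the same as the paper's (projective closure, Raynaud--Gruson flattening, normalization, coefficients via strict transforms, then properness of the Chow morphism on the normal projective base), and your final paragraph on the Chow image is correct and complete. However, there is a genuine gap in the middle step, and it is not the one you flagged. You apply \cite[Th\'eor\`eme 5.2.2]{gr71} to the total space $\bar{U} \to \bar{W}$ and then assert that ``the components of $U''$ are precisely the strict transforms, each flat and dominant over $W''$.'' The theorem only makes the strict transform of $\bar{U}$ as a whole flat; flatness of a reduced scheme over a base does \emph{not} descend to its individual irreducible components, and Definition \ref{familydef} requires every component of $U'$ to be flat over the entire projective base $W'$. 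Your proposed repairs do not close this gap: generic flatness, and likewise Construction \ref{equidimsubschemeconstr}, only produce flatness after shrinking to a dense open subset of the base, at which point the base is no longer projective, the Chow map need not have closed image, and the conclusion $\ch(W') = \overline{\ch(W)}$ (an equality with a \emph{closed} set) is lost -- this is precisely the situation the lemma is designed to avoid. Re-running the flattening theorem on the reduced total space has the same defect, since it again only flattens the union.

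The correct fix is what the paper does: apply the flattening theorem to each integral component $\bar{U}_i \to \bar{W}$ separately, and combine the resulting modifications into a single ``simultaneous flattening.'' This iteration terminates in finitely many steps because once a component has been flattened, any further birational base change preserves its flatness (base change of flat is flat) and introduces no new non-dominating components (flatness over an integral base means the structure sheaf is torsion-free, so every associated point dominates the base); hence each component equals its own strict transform under subsequent modifications. Incidentally, the issue you identified as the ``genuine obstacle'' -- reducedness of the flattened family -- is actually a non-issue: the strict transform is the schematic closure of the generic fiber, so its structure sheaf injects into that of the (reduced) generic fiber of $\bar{U} \to \bar{W}$, and is therefore automatically reduced; similarly, the pullback along the normalization $\nu$ of an integral family that is flat over $W''$ remains integral, since flatness forces every component to dominate $W'$ and the generic fiber is unchanged under a birational base change.
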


\begin{proof}
Let $\tilde{W}$ be any projective closure of $W$ and let $\tilde{U}$ be the closure of $U$ in $\tilde{W} \times X$.  Let $\phi: W' \to \tilde{W}$ be the normalization of a simultaneous flattening of the morphisms $\tilde{p}: \tilde{U}_{i} \to \tilde{W}$ for the components $\tilde{U}_{i}$ of $\tilde{U}$.  Let $U'$ denote the reduced subscheme of $W' \times X$ defined by the components of $\tilde{U} \times_{\tilde{W}} W'$ that dominate $W'$.  Since the components of $U'$ are in bijection with the components of $U$, we can assign to each component of $U'$ the coefficient of the corresponding component of $U$.  Then $p': U' \to W'$ is a family of effective $k$ cycles.  Since  the $p'$ and $p$ agree over an open normal subset of the base, the closure of the images under the map $\ch$ agree.
\end{proof}

\begin{rmk}
It is also important to know whether a rational family $p: U \to W$ can be extended to a \emph{rational} family over a projective closure of $W$ (although we will not need such statements below).  The arguments of \cite[Theorem 3]{samuel56} show that the subset of $\Chow(X)$ parametrizing cycles in a fixed rational equivalence class is a countable union of closed subvarieties.  Thus we can extend families in this way when working over an uncountable algebraically closed field $K$. 
\end{rmk}

\subsection{Chow dimension of families}

Let $p: U \to W$ be a family of effective $k$-cycles on a projective variety $X$.  Then all the cycle-theoretic fibers of $p$ are algebraically equivalent.  Indeed, for any two closed points of $W$, let $C$ be the normalization of a curve through those two points; since the base change of $U$ to $C$ is a union of flat families of effective cycles, we see that the corresponding cycle-theoretic fibers are algebraically equivalent.

\begin{defn}
Let $p: U \to W$ be a family of effective $k$-cycles on a projective variety $X$.  We say that $p$ represents $\alpha \in N_{k}(X)_{\mathbb{Z}}$ if the cycle-theoretic fibers of our family have class $\alpha$.
\end{defn}

\begin{defn}
Let $X$ be a projective variety and let $p: U \to W$ be an effective family of $k$-cycles.  We define
the Chow dimension of $p$ to be
\begin{equation*}
\chdim_{X}(p) := \dim(\overline{\mathrm{Im} \, \ch \!: W \dashrightarrow \Chow(X)})
\end{equation*}
If $\alpha \in N_{k}(X)_{\mathbb{Z}}$, we define
\begin{equation*}
\chdim_{X}(\alpha) = \max \{ \chdim(p) | p: U \to W \textrm{ represents } \alpha \}.
\end{equation*}
We will usually omit the subscript when it is clear from the context.
\end{defn}

\begin{rmk}
When our ground field $K$ has characteristic $0$, \cite{kollar96} constructs a universal family over any component of $\Chow(X)$.  Using Construction \ref{cycletofamilyconstr} this can be turned into a family of effective cycles in the sense of Definition \ref{familydef}.  Thus
\begin{equation*}
\chdim(\alpha) = \max \{ \dim(Y)\,  | \, Y \textrm{ is a component of } \Chow(X) \textrm{ representing  }\alpha \}.
\end{equation*}
Even when $K$ has characteristic $p$, for any component of $\Chow(X)$ \cite[I.4.14 Theorem]{kollar96} constructs a family of cycles whose chow map $\ch$ is dominant, so that we still have the same interpretation.
\end{rmk}

We can also consider the analogous construction for rational families.

\begin{defn}
Let $X$ be a projective variety.  For $\alpha \in N_{k}(X)_{\mathbb{Z}}$, we define
\begin{equation*}
\rchdim(\alpha) = \max \{ \chdim(p) | p: U \to W \textrm{ is a rational family representing } \alpha \}.
\end{equation*}
Similarly, for $\tau \in A_{k}(X)$ we define
\begin{equation*}
\rchdim(\tau) = \max \{ \chdim(p) | p: U \to W \textrm{ is a rational family of class } \tau \}.
\end{equation*}
\end{defn}

\subsection{Geometry of families}

\begin{defn}
Let $X$ be a projective variety and let $p: U \to W$ be a family of effective cycles on $X$.
\begin{itemize}
\item We say that $p$ is a reduced family if every coefficient $a_{i}$ is $1$.  Any family of effective cycles yields a reduced family by simply changing the coefficients.
\item We say that $p$ is an irreducible family if $U$ only has one component.
For any component $U_{i}$ of $U$, we have an associated irreducible family $p_{i}: U_{i} \to W$ (with coefficient $a_{i}$).
\end{itemize}
\end{defn}

\begin{lem} \label{genreduceddimest}
Let $X$ be a projective variety and let $p: U \to W$ be an effective family of $k$-cycles.  Let $p': U' \to W$ denote the reduced family for $p$.  Then $\chdim(p') = \chdim(p)$.
\end{lem}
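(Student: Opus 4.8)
The plan is to compare the two Chow maps through the curves they contract, using Lemma \ref{injectivechowlemma}. The essential point is that $p$ and $p'$ have the \emph{same} underlying reduced scheme $U$ with the same irreducible components $U_{i}$; they differ only in that the positive coefficients $a_{i}$ of $p$ are all replaced by $1$. Consequently, for every closed point $w \in W$ the cycle-theoretic fibers of $p$ and of $p'$ over $w$ have exactly the same support, namely $\bigcup_{i} \Supp(w \times_{W} U_{i})$. Since $\chdim$ is computed from the image of the morphism $\ch$ on the normal locus, I would first restrict both families to the normal locus $W^{0} \subseteq W$; this is an integral, hence weakly normal, variety over which both $\ch_{p}$ and $\ch_{p'}$ are genuine morphisms, and one has $\chdim(p) = \dim \overline{\ch_{p}(W^{0})}$ and $\chdim(p') = \dim \overline{\ch_{p'}(W^{0})}$.

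Next I would apply Lemma \ref{injectivechowlemma} to the weakly normal base $W^{0}$. For any irreducible curve $C \subseteq W^{0}$, that lemma says $C$ is contracted by $\ch_{p}$ if and only if all cycle-theoretic fibers of $p$ over $C$ share a common support, and likewise for $\ch_{p'}$. By the observation above the $p$-fibers and the $p'$-fibers have identical supports point by point, so these two conditions coincide. Hence $\ch_{p}$ and $\ch_{p'}$ contract exactly the same irreducible curves of $W^{0}$.

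It then remains to deduce that two morphisms $f := \ch_{p}$ and $g := \ch_{p'}$ out of the variety $W^{0}$ which contract precisely the same curves have images of equal dimension; this last step is where the real content lies. Writing $m_{f} = \dim W^{0} - \dim\overline{f(W^{0})}$ and $m_{g}$ analogously for the generic fiber dimensions, suppose for contradiction that $m_{g} > m_{f}$. A general fiber of $g$ then has an irreducible component $G$ with $\dim G = m_{g} > m_{f}$, so $G$ cannot lie inside any fiber of $f$ (these have dimension at most $m_{f}$); thus $f(G)$ is positive-dimensional. Choosing two points of $G$ with distinct $f$-images together with an irreducible curve $C \subseteq G$ through them, I obtain a curve contracted by $g$ (as $G$ lies in a fiber of $g$) but not by $f$, contradicting the previous paragraph. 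By symmetry $m_{f} = m_{g}$, and therefore $\dim\overline{f(W^{0})} = \dim\overline{g(W^{0})}$, i.e.\ $\chdim(p) = \chdim(p')$.

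The main obstacle is precisely this final comparison: the steps before it are the bookkeeping that support is insensitive to positive coefficients, while turning ``the same contracted curves'' into ``the same image dimension'' requires the generic-fiber argument above, whose only subtlety is ensuring that inside a positive-dimensional fiber of one map one can always find an irreducible curve that the other map does not contract.
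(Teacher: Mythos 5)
Your reduction is exactly the paper's: the cycle-theoretic fibers of $p$ and $p'$ have identical supports, so by Lemma \ref{injectivechowlemma} the maps $\ch_{p}$ and $\ch_{p'}$ contract precisely the same curves in the normal locus $W^{0}$; the paper's proof stops there and treats the passage to equal image dimensions as immediate. You rightly identify that this passage needs an argument, but the argument you give rests on a false claim: the parenthetical assertion that every fiber of $f$ has dimension at most $m_{f}$. Fiber dimension is only \emph{upper} semicontinuous, so special fibers of $f$ can jump above the generic value $m_{f}$ (the exceptional curve of the blow-up of a surface at a point is a one-dimensional fiber of a birational morphism with $m_{f}=0$). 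Consequently an arbitrary component $G$ of a general fiber of $g$ may perfectly well sit inside such a jumping fiber of $f$; for that choice of $G$ you get no contradiction, since every curve in $G$ is then contracted by both maps, and your argument halts.

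The gap is local and repairable. Let $S \subset W^{0}$ be the locus of points at which the local fiber dimension of $f$ exceeds $m_{f}$; it is closed by Chevalley's semicontinuity theorem and is a proper subset. The union of the fibers of $g$ over a suitable dense open subset of $\overline{g(W^{0})}$ is dense in $W^{0}$, hence not contained in $S$, so you may choose the general fiber of $g$ and the component $G$ of it (with $\dim G = m_{g}$) so that $G$ meets $W^{0} \setminus S$. If $G$ were contained in some fiber $f^{-1}(z)$, then $G$ would lie in an irreducible component of $f^{-1}(z)$ passing through a point of $W^{0} \setminus S$, and such a component has dimension at most $m_{f} < m_{g}$, a contradiction; the rest of your argument (two points of $G$ with distinct $f$-images joined by an irreducible curve inside $G$) then runs as written. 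Alternatively, the paper's own Lemma \ref{dimensionlemma} packages exactly this deduction: every irreducible component of a general fiber $F$ of $\ch_{p}$ is contracted by $\ch_{p'}$ (any two of its points lie on an irreducible curve, which $\ch_{p}$, hence $\ch_{p'}$, contracts), so $\dim \overline{\ch_{p'}(W^{0})} \leq 0 + \dim \overline{\ch_{p}(W^{0})}$, and symmetry finishes the proof without any fiber-dimension bookkeeping.
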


\begin{proof}
Note that $p: U \to W$ and $p': U' \to W$ agree set-theoretically.  The statement then follows immediately from Lemma \ref{injectivechowlemma}.
\end{proof}

\begin{lem}  \label{componentsdimest}
Let $X$ be a projective variety and let $p: U \to W$ be a family of effective $k$-cycles on $X$.  For each component $U_{i}$ of $U$, let  $p_{i} := p|_{U_{i}}: U_{i} \to W$ denote the irreducible family induced by $U_{i}$.  Then $\chdim(p) \leq \sum_{i} \chdim(p_{i})$.
\end{lem}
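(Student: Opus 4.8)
The plan is to compare the Chow map $\ch_p$ of the full family with the Chow maps $\ch_{p_i}$ of its irreducible pieces, assembled into the product map
\[
\Phi = (\ch_{p_{1}}, \ldots, \ch_{p_{r}}) \colon W \dashrightarrow \textstyle\prod_{i} \Chow(X).
\]
Since the image of $\Phi$ lands in $\prod_{i} \overline{\mathrm{Im}\, \ch_{p_{i}}}$, a variety of dimension $\sum_{i} \chdim(p_{i})$, the lemma will follow once I show $\dim \overline{\mathrm{Im}\, \ch_{p}} \leq \dim \overline{\mathrm{Im}\, \Phi}$; that is, that $\ch_{p}$ contracts at least as much as $\Phi$ does. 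Conceptually this is the statement that the Chow point of $\sum_{i} a_{i} Z_{i}$ is determined by the Chow points of the summands, but I will route this through supports rather than a naive ``sum of Chow points'' morphism, since in positive characteristic the Chow map only records supports of cycles (cf.\ Lemma \ref{injectivechowlemma}).

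First I would reduce to a convenient model. The construction in Lemma \ref{goodfamilymodification} simultaneously flattens all components of the closure of $U$, so a single application produces a normal projective variety $W'$, birational to a projective closure of $W$, carrying a family $p'$ and its induced irreducible subfamilies $p_{i}'$ such that $\ch_{p'}$ and all the $\ch_{p_{i}'}$ are morphisms on $W'$ with the same image closures as before. In particular every $\chdim$ in sight is unchanged, and $W'$ is normal, hence weakly normal, so Lemma \ref{injectivechowlemma} applies. I then replace $W$ by $W'$ and work with honest morphisms.

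The key step is the contraction comparison: every curve $C \subset W'$ contracted by $\Phi$ is also contracted by $\ch_{p}$. A curve is contracted by $\Phi$ precisely when it is contracted by each $\ch_{p_{i}}$, which by Lemma \ref{injectivechowlemma} means that the support of the cycle-theoretic fiber of each $p_{i}$ is constant along $C$. Because the support of the fiber of $p$ is the union $\bigcup_{i} \Supp(Z_{i})$ of the supports of the fibers of the $p_{i}$, it too is constant along $C$, and Lemma \ref{injectivechowlemma} then gives that $C$ is contracted by $\ch_{p}$. Finally I upgrade this to the desired dimension inequality by rigidity. Set $Y = \overline{\mathrm{Im}\, \Phi}$ and take the Stein factorization $W' \to Y' \to Y$, with $W' \to Y'$ proper with connected fibers and $Y' \to Y$ finite, so $\dim Y' = \dim Y \leq \sum_{i} \chdim(p_{i})$. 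Each connected fiber $F$ of $W' \to Y'$ is sent by $\Phi$ to a single point of $Y$, so every curve in $F$ is contracted by $\Phi$, hence by $\ch_{p}$; as $F$ is connected and proper, $\ch_{p}(F)$ is a connected projective set containing no curve, i.e.\ a point. Thus $\ch_{p}$ contracts every fiber of $W' \to Y'$, and the rigidity lemma factors $\ch_{p}$ as $W' \to Y' \to \Chow(X)$, giving $\chdim(p) = \dim \overline{\mathrm{Im}\, \ch_{p}} \leq \dim Y' \leq \sum_{i} \chdim(p_{i})$.

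The main obstacle is this last passage from ``$\ch_{p}$ contracts every curve that $\Phi$ contracts'' to an actual inequality of image dimensions; the curve-by-curve statement is not by itself enough, which is why I pass to a projective model and invoke Stein factorization together with rigidity. A secondary subtlety, forced by characteristic $p$, is that the comparison in the key step must be made through supports via Lemma \ref{injectivechowlemma} and the identity $\Supp(\sum_{i} a_{i} Z_{i}) = \bigcup_{i} \Supp(Z_{i})$, rather than by summing Chow points directly. (Alternatively, one can run an induction on the number of components using Lemma \ref{dimensionlemma} applied to $f = \ch_{p_{1}}$ and $g = \ch_{p}$, but bounding the image of $\ch_{p}$ on a general fiber of $\ch_{p_{1}}$ runs into the same rigidity issue, so the product-map formulation seems cleaner.)
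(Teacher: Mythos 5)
Your proof is correct and is essentially the paper's own argument: the paper's entire proof is the one-line observation that, by Lemma \ref{injectivechowlemma}, $\ch_{p}$ factors rationally through $\prod_{i}\ch_{p_{i}}$, which is exactly your support-comparison step $\Supp\bigl(\sum_{i} a_{i}Z_{i}\bigr)=\bigcup_{i}\Supp(Z_{i})$ applied to curves. The passage to a projective model, Stein factorization, and rigidity in your write-up is just an explicit justification of that factorization and the resulting dimension inequality, which the paper leaves implicit.
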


\begin{proof}
By Lemma \ref{injectivechowlemma}, the map $\ch_{p}: W \dashrightarrow \Chow(X)$ factors rationally through the map $\prod_{i} \ch_{p_{i}}: W \dashrightarrow \prod_{i} \Chow(X)$.
\end{proof}

We will also need several geometric constructions.  

\begin{constr}[Flat pullback families] \label{flatpullbackfamilies}
Let $g: Y \to X$ be a flat morphism of projective varieties of relative dimension $d$.  Suppose that $p: U \to W$ is a family of effective $k$-cycles on $X$ with underlying cycle $V$.  The flat pullback cycle $g^{*}V$ on $W \times Y$ is effective and has relative dimension $(d+k)$ over $W$.   We define the flat pullback family $g^{*}p: U' \to W^{0}$ of effective $(d+k)$-cycles on $Y$ over an open subset $W^{0} \subset W$ by applying Construction \ref{cycletofamilyconstr} to $g^{*}V$.
\end{constr}

\begin{constr}[Pushforward families] \label{pushforwardoffamilies}
Let $f: X \to Y$ be a morphism of projective varieties.  Suppose that $p: U \to W$ is a family of effective $k$-cycles on $X$ with underlying cycle $V$.  Consider the cycle pushforward $f_{*}V$ on $W \times Y$ and assume $f_{*}V \neq 0$.  Construction \ref{cycletofamilyconstr} yields a family of $k$-cycles $f_{*}p: \tilde{U} \to W^{0}$ over an open subset of $W$.  We call $f_{*}p$ the pushforward family.  Note that this operation is compatible with the pushforward on cycle-theoretic fibers over $W^{0}$ by \cite[I.3.2 Proposition]{kollar96}.
\end{constr}

\begin{constr}[Restriction families] \label{restrictionfamilies}
Let $X$ be a projective variety and let $p: U \to W$ be a family of effective $k$-cycles on $X$.  Let $W' \subset W$ be an integral subvariety.  For each component $U_{i}$ of $U$, the restriction $U_{i} \times_{W} W'$ is flat over $W'$ of relative dimension $k$.  Consider the cycle $V$ on $W' \times X$ defined as the sum $V = \sum_{i} a_{i}V_{i}$ where $V_{i}$ is the fundamental cycle of $U_{i}$ restricted to $W'$.  We define the restriction of the family $p$ to $W'$ over an open subset $W'^{0} \subset W'$ by applying Construction \ref{cycletofamilyconstr} to $V$.  Note that this operation leaves the cycle-theoretic fibers unchanged over $W'^{0}$.  Note also that if $W' \subset W$ is open, then we may take $W^{0} = W'$ and the family $p$ is simply the base-change to $W^{0}$.
\end{constr}

\begin{constr}[Family sum] \label{familysumconstr}
Let $X$ be a projective variety and let $p: U \to W$ and $q: S \to T$ be two families of effective $k$-cycles on $X$.  We construct the family sum of $p$ and $q$ over an open subset of $W \times T$ as follows.  Let $V_{p}$ and $V_{q}$ denote the underlying cycles for $p$ and $q$ on $W \times X$ and $T \times X$ respectively.  The family sum of $p$ and $q$ is the family defined by applying Construction \ref{cycletofamilyconstr} to the sum of the flat pullbacks of $V_{p}$ and $V_{q}$ to $W \times T \times X$.
\end{constr}

\begin{constr}[Strict transform families]
Let $X$ be an integral projective variety and let $p: U \to W$ be a family of effective $k$-cycles on $X$.  Suppose that $\phi: X \dashrightarrow Y$ is a birational map.  We define the strict transform family of effective $k$-cycles on $Y$ as follows.

First, modify $U$ by removing all irreducible components whose image in $X$ is contained in the locus where $\phi$ is not an isomorphism.  Then define the cycle $U'$ on $W \times Y$ by taking the strict transform of the remaining components of $U$.  We define the strict transform family by applying Construction \ref{cycletofamilyconstr} to $U'$ over $W$.
\end{constr}

\begin{constr}[Intersecting against divisors] \label{intersectionfamilyconstr}
Let $X$ be a projective variety and let $p: U \to W$ be a family of effective $k$-cycles on $X$.  Let $D$ be an effective Cartier divisor on $X$.  If every cycle in our family has a component contained in $\Supp(D)$, we say that the intersection family of $p$ and $D$ is empty.

Otherwise, let $s: U \to X$ denote the projection map.  By assumption the effective Cartier divisor $s^{*}D$ does not contain any component of $U$, so we may take a cycle-theoretic intersection of $s^{*}D$ with the cycle underlying the family $p$ to obtain a $(k-1+\dim W)$-cycle $V$ on $W \times \Supp(D)$.  We then apply Construction \ref{cycletofamilyconstr} to obtain a family of cycles on $\Supp(D)$ over an open subset of $W$ which we denote by $p \cdot D$.  We can also consider the intersection as a family of cycles on $X$ by pushing forward.

Finally, suppose that we have a linear series $|L|$.  We define the intersection of $|L|$ with a family $p: U \to W$ as follows.  Consider the flat pullback family $q: U' \to W^{0}$ on $\mathbb{P}(|L|) \times X$.  Then intersect the family $q$ against the pullback of the universal divisor on $\mathbb{P}(|L|) \times X$ to obtain a family of cycles on $\mathbb{P}(|L|) \times X$.  The underlying cycle has dimension $k-1+\dim W + \dim(\mathbb{P}(|L|))$; by using Construction \ref{cycletofamilyconstr}, we can convert this cycle to a family of effective $(k-1)$-cycles on $X$ over an open subset of $W \times \mathbb{P}(|L|)$.
\end{constr}

We conclude this section with a brief analysis of how these constructions affect the Chow dimension.

\begin{lem} \label{genfinitepushlem}
Let $f: X \to Y$ be a morphism of projective varieties.  Let $p: U \to W$ be a family of effective $k$-cycles on $X$ such that for every component $U_{i}$ of $U$ the image $\overline{s(U_{i})}$ is not contracted to a variety of smaller dimension by $f$.  Then $\chdim(p) = \chdim(f_{*}p)$.
\end{lem}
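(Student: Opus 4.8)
The plan is to compare the two Chow maps $\ch_p : W \dashrightarrow \Chow(X)$ and $\ch_{f_*p}: W \dashrightarrow \Chow(Y)$, which share the same base $W$ (after restricting to the open locus where $f_*p$ is defined), and to apply Lemma \ref{dimensionlemma} in each direction. Write $V_X = \overline{\mathrm{Im}(\ch_p)}$ and $V_Y = \overline{\mathrm{Im}(\ch_{f_*p})}$, so that $\chdim(p) = \dim V_X$ and $\chdim(f_*p) = \dim V_Y$. The essential geometric input is that the no-contraction hypothesis guarantees, for $w$ general, that no component of the member $Z_w$ is contracted by $f$; by Construction \ref{pushforwardoffamilies} this makes $f_*p$ compatible with cycle pushforward, so that $\Supp(f_*Z_w) = f(\Supp Z_w)$.

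First I would prove $\dim V_Y \le \dim V_X$. Let $F$ be a general fiber of $\ch_p$ over a closed point of $V_X$. Every member $Z_w$ with $w \in F$ maps to this one point of $\Chow(X)$ and so has the same support; hence the pushforwards $f_*Z_w$ all have support $f(\Supp Z_w)$, and $\ch_{f_*p}(F)$ is a single point. Lemma \ref{dimensionlemma}, applied with the role of $f$ played by $\ch_p$ and that of $g$ by $\ch_{f_*p}$, then yields $\dim V_Y \le \dim \overline{\ch_{f_*p}(F)} + \dim V_X = \dim V_X$.

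The reverse inequality $\dim V_X \le \dim V_Y$ is the heart of the matter. Let $F'$ be a general fiber of $\ch_{f_*p}$; then every pushforward $f_*Z_w$ with $w \in F'$ is the same cycle, with one fixed support $T$ having fixed $k$-dimensional components $T_1,\dots,T_r$. For such $w$, each component $V$ of $\Supp Z_w$ maps finitely onto its image, so $f(V)$ is one of the $T_j$, and $V$ is a $k$-dimensional subvariety of $\overline{s(U_i)} \cap f^{-1}(T_j)$ dominating $T_j$. Because $f$ restricted to $\overline{s(U_i)}$ is generically finite, $\overline{s(U_i)} \cap f^{-1}(T_j)$ has only finitely many $k$-dimensional components dominating $T_j$; thus $V$ ranges over a fixed finite set of subvarieties independent of $w$, and $\Supp Z_w$ takes only finitely many values as $w$ varies over $F'$. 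On each irreducible component of $F'$ the support is therefore constant on a dense open subset, so by Lemma \ref{injectivechowlemma} the map $\ch_p$ contracts $F'$ and $\dim\overline{\ch_p(F')} = 0$. Lemma \ref{dimensionlemma}, now applied with the role of $f$ played by $\ch_{f_*p}$ and that of $g$ by $\ch_p$, gives $\dim V_X \le \dim V_Y$, and combining the two inequalities proves the claim.

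The main obstacle is the finiteness step in the previous paragraph: a priori, pushing forward can collapse a positive-dimensional family of genuinely distinct cycles to a single cycle (for instance, a moving family of multisections of a fibration all have the same image-support), which would make $\chdim$ drop strictly. The no-contraction hypothesis on each $\overline{s(U_i)}$ is exactly what rules this out, since it forces $\overline{s(U_i)} \cap f^{-1}(T_j)$ to be finite over $T_j$ and hence pins the possible supports of members down to finitely many. A secondary technical point to handle with care is the passage from ``finitely many supports over an irreducible base'' to ``$\ch_p$ contracts the fiber,'' which relies on the support being constant on a dense open subset together with the support characterization of contraction in Lemma \ref{injectivechowlemma}.
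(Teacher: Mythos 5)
Your two-inequality architecture (Lemma \ref{dimensionlemma} applied to the fibers of $\ch_{p}$ and of $\ch_{f_{*}p}$) is sound, and the first inequality is fine; but the justification of the crucial finiteness step in the second inequality has a genuine gap. You assert that for $w \in F'$ ``each component $V$ of $\Supp Z_{w}$ maps finitely onto its image,'' and, in your closing paragraph, that the no-contraction hypothesis ``forces $\overline{s(U_{i})} \cap f^{-1}(T_{j})$ to be finite over $T_{j}$.'' Neither follows from the hypothesis, which only says that $f|_{\overline{s(U_{i})}}$ is \emph{generically} finite and says nothing about its fibers over the particular subvariety $T_{j}$, which may sit inside the image of the exceptional locus. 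Concretely, let $f: X \to Y = \mathbb{P}^{3}$ be the blow-up along a line $L$ and let $p$ be a family of curves dominating $X$, so $\overline{s(U_{i})} = X$ and the hypothesis holds; if $T_{j} = L$ then $\overline{s(U_{i})} \cap f^{-1}(T_{j}) = f^{-1}(L)$ is a $\mathbb{P}^{1}$-bundle over $L$, not finite over it, and it carries positive-dimensional families of sections, all pushing forward to $L$ --- exactly the kind of moving components your finiteness claim must exclude. As written, ``$V$ ranges over a fixed finite set'' is therefore a non sequitur: $V$ is only known to be a $k$-dimensional subvariety of $\overline{s(U_{i})} \cap f^{-1}(T_{j})$ dominating $T_{j}$, not an irreducible component of it, and there can be infinitely many such subvarieties.

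The step can be repaired, but the hypothesis enters through a different mechanism, via flatness of $p$, and this argument is the real content of the lemma. Let $E_{i} \subsetneq \overline{s(U_{i})}$ be the locus where $f|_{\overline{s(U_{i})}}$ has positive-dimensional fibers. Since $s(U_{i})$ is dense in $\overline{s(U_{i})}$, the set $s^{-1}(E_{i}) \cap U_{i}$ is a proper closed subset of the irreducible $U_{i}$, of dimension at most $\dim W + k - 1$; as $p|_{U_{i}}$ is flat of relative dimension $k$, the locus $B \subset W$ of points whose member has a component inside some $E_{i}$ has dimension at most $\dim W - 1$. A general fiber $F'$ of $\ch_{f_{*}p}$ has no irreducible component contained in $B$ (components of general fibers have dimension at least $\dim W - \dim V_{Y}$, while general fibers of the restriction of $\ch_{f_{*}p}$ to $B$ have strictly smaller dimension). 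Now for $w \in F' \setminus B$ the argument closes: a component $V$ of $\Supp Z_{w}$ avoids $E_{i}$, hence is not contracted and maps onto some $T_{j}$; and if $V$ lay inside a component $Q_{0}$ of $\overline{s(U_{i})} \cap f^{-1}(T_{j})$ of dimension $> k$, then every fiber of $Q_{0} \to T_{j}$ would be positive-dimensional, forcing $Q_{0} \subseteq E_{i}$ and so $V \subseteq E_{i}$, a contradiction; so $V$ \emph{is} one of the finitely many components of the intersection. (For comparison, the paper argues in the opposite direction and over curves: a general curve $T$ not contracted by $\ch_{p}$ has $\dim \overline{s(p^{-1}T)} = \dim f(\overline{s(p^{-1}T)})$, so by Lemma \ref{injectivechowlemma} it is not contracted by $\ch_{f_{*}p}$; the same genericity considerations, i.e.\ control of the exceptional locus $E_{i}$, are what make that step legitimate, and they are precisely what is missing from your write-up.)
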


\begin{proof}
Let $T$ be an integral curve through a general point of $W$ that is not contracted by $\ch_{p}$ and set $S = p^{-1}(T)$.  Then $\dim(\overline{s(S)}) = \dim(f(\overline{s(S)}))$.  Thus the cycle-theoretic fibers parametrized by $T$ do not pushforward to the same cycle on $Y$.  We conclude by Lemma \ref{injectivechowlemma} that $T$ is not contracted by $\ch_{f_{*}p}$.
\end{proof}

\begin{lem} \label{chdimfamilysumlem}
Let $X$ be a projective variety and let $p: U \to W$ and $q: S \to T$ be two families of effective $k$-cycles on $X$.  Then $\chdim(p+q) = \chdim(p) + \chdim(q)$.
\end{lem}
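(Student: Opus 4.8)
The plan is to factor the Chow map of the family sum through the Chow maps of the two summand families together with the cycle-addition operation on $\Chow(X)$. Write $a = \chdim(p)$ and $b = \chdim(q)$, and let $r := p+q$ be the family sum, a family of effective $k$-cycles over an open subset $(W \times T)^{0} \subseteq W \times T$ whose cycle-theoretic fiber over $(w,t)$ is $Z_{p}(w) + Z_{q}(t)$, the sum of the corresponding fibers of $p$ and $q$ (by Construction \ref{familysumconstr} and Construction \ref{cycletofamilyconstr}). The product map $\Phi := (\ch_{p} \times \ch_{q}): W \times T \dashrightarrow \Chow(X) \times \Chow(X)$ sending $(w,t) \mapsto (\ch_{p}(w), \ch_{q}(t))$ has image with closure $\overline{\mathrm{Im}\,\ch_{p}} \times \overline{\mathrm{Im}\,\ch_{q}}$, of dimension $a+b$. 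I want to realize $\ch_{r}$ as $\sigma \circ \Phi$, where $\sigma$ records the sum of a pair of cycles, and then show $\sigma$ does not drop dimension.

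First I would record the factorization. Since the Chow point of a cycle determines that cycle, $\Phi(w,t) = \Phi(w',t')$ forces $Z_{p}(w) = Z_{p}(w')$ and $Z_{q}(t) = Z_{q}(t')$, hence $Z_{p}(w)+Z_{q}(t) = Z_{p}(w')+Z_{q}(t')$ and therefore $\ch_{r}(w,t) = \ch_{r}(w',t')$. Thus $\ch_{r}$ factors rationally as $\psi \circ \Phi$ for a map $\psi: \overline{\mathrm{Im}\,\Phi} \dashrightarrow \Chow(X)$, and consequently $\chdim(r) = \dim \overline{\mathrm{Im}\,\psi} \leq \dim \overline{\mathrm{Im}\,\Phi} = a+b$. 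This already yields the inequality $\chdim(p+q) \leq \chdim(p) + \chdim(q)$.

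For the reverse inequality it suffices to prove that $\psi$ is generically finite onto its image, since then $\dim \overline{\mathrm{Im}\,\psi} = \dim\overline{\mathrm{Im}\,\Phi} = a+b$. The map $\psi$ sends a pair $(Z_{1}, Z_{2})$ to $Z_{1}+Z_{2}$, so its fiber over a cycle $Z$ consists of the pairs $(Z_{1},Z_{2}) \in \overline{\mathrm{Im}\,\Phi}$ with $Z_{1}+Z_{2} = Z$. The key observation is that there are only finitely many such pairs: the support of each $Z_{i}$ is a union of the finitely many irreducible components of $\Supp(Z)$, and since the $Z_{i}$ are effective the multiplicity of each component of $Z$ can be partitioned between $Z_{1}$ and $Z_{2}$ in only finitely many ways (the degrees of $Z_{1}$ and $Z_{2}$ are in any case fixed by $p$ and $q$). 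Hence every fiber of $\psi$ is finite, so $\psi$ is generically finite and the two bounds combine to give the asserted equality.

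The main obstacle is making this finiteness step rigorous, which reduces to the elementary but essential fact that a fixed effective cycle admits only finitely many decompositions into a sum of two effective cycles of prescribed degrees; this is precisely what prevents the addition from collapsing dimension. A secondary point requiring care is the behavior of the Chow map in characteristic $p$: one should either check that $\sigma$ (equivalently $\psi$) is an honest finite morphism in Kollár's framework, or phrase the whole argument set-theoretically using Lemma \ref{injectivechowlemma}, replacing statements about cycles by statements about their supports — the finiteness count above survives verbatim, since the candidate supports range over the finitely many unions of components of $\Supp(Z)$. I would also note that passing from $W$ and $T$ to their normalizations, or invoking Lemma \ref{goodfamilymodification}, lets me treat $\ch_{p}$, $\ch_{q}$, and $\ch_{r}$ as genuine morphisms on dense open sets, which is all that these dimension counts require.
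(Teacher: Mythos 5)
Your proof is correct, but it takes a genuinely different route from the paper's, which disposes of the lemma in two sentences: by Lemma \ref{injectivechowlemma}, a curve through a general point of $(W \times T)^{0}$ is contracted by $\ch_{p+q}$ if and only if its projections to $W$ and $T$ are contracted by $\ch_{p}$ and $\ch_{q}$, so general fibers of $\ch_{p+q}$ are products of general fibers and the image dimensions add. You instead factor $\ch_{p+q}$ through $\ch_{p} \times \ch_{q}$ followed by the addition map on Chow varieties, getting the upper bound from the factorization and the lower bound from generic finiteness of addition, which you reduce to the pleasant elementary fact that an effective cycle $\sum_{i} m_{i}V_{i}$ admits only $\prod_{i}(m_{i}+1)$ decompositions into two effective summands. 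In characteristic $0$, where points of $\Chow(X)$ biject with effective cycles, this is complete and arguably more transparent; it also isolates a reusable statement (the addition map $\Chow(X) \times \Chow(X) \to \Chow(X)$ has finite fibers). What the paper's route buys is uniformity in characteristic $p$: there a Chow point only reliably determines the \emph{support} of a cycle (Chow forms can behave inseparably), which is exactly why Lemma \ref{injectivechowlemma} is stated in terms of supports and why the paper argues only with contracted curves. Two cautions on your closing remarks. First, the claim that the set-theoretic version ``survives verbatim'' is too quick: with supports, finiteness of the candidates (unions of components of $\Supp Z$) does not by itself prevent $w \mapsto \Supp Z_{p}(w)$ from jumping among the finitely many candidates along a curve; you must add that each locus $\{w : \Supp Z_{p}(w) = S'\}$ is closed (using properness of $p$ and openness of flat maps), so an irreducible curve lies in exactly one of them. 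That connectedness step is precisely what the curve-contraction criterion packages, and once you invoke it your argument essentially becomes the paper's. Second, in characteristic $p$ a generically injective map need not be birational (Frobenius), so rather than literally constructing the rational map $\psi$ it is safer to run the upper bound through the graph of $(\ch_{p} \times \ch_{q}, \ch_{p+q})$ together with Lemma \ref{dimensionlemma}; for the dimension count this changes nothing.
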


\begin{proof}
A curve through a general point of $(W \times T)^{0}$ is contracted by $\ch_{p+q}$ if and only if its projection to $W$ and to $T$ are contracted by $\ch_{p}$ and $\ch_{q}$ respectively.  We conclude by Lemma \ref{injectivechowlemma}.
\end{proof}

\section{Chow dimension of projective space} \label{chowdimpnsec}

In this section we compute the dimension of the Chow variety of $\mathbb{P}^{n}$.  \cite{eh92} computes the dimension of the Chow variety of curves.  The precise statement is as follows: let $\ell$ denote the class of a line on $\mathbb{P}^{n}$.  Then for $d>1$,
\begin{equation*}
\dim \Chow(\mathbb{P}^{n},d \ell) = \max \left\{ 2d(n-1), \frac{d^{2} + 3d}{2} + 3(n-2) \right\}.
\end{equation*}
Note that the first number is the dimension of the space of unions of $d$ lines on $\mathbb{P}^{n}$, and the second is the dimension of the space of degree $d$ plane curves.

\begin{rmk}
Although \cite{eh92} does not specify the ground field, some of the references explicitly work only over $\mathbb{C}$.  However, the results of \cite{eh92} holds equally well over any algebraically closed field.  The argument of \cite{eh92} involves mainly estimates on the dimension of the space of sections of a normal sheaf.  Since we are only working with embedded curves, we do not need to worry about pathologies of tangent spaces in characteristic $p$.  The only additional verifications one needs to make are:
\begin{itemize}
\item The Halphen bounds on genus, and their extensions in \cite{harris82}, hold in arbitrary characteristic using the same arguments.
\item The deformation theoretic results of \cite{ac81} used in the paper are also true in arbitrary characteristic.  Indeed, suppose that $f: C \to \mathbb{P}^{n}$ is a morphism from a smooth curve such that $f$ restricts to an isomorphism from an open set of $C$ onto its image.  Using the deformation theory for maps as explained in \cite{sernesi06}, one sees that deformations of $f$ that change the image must be associated with first-order deformations which are not torsion sections of the normal sheaf (since they can not fix an open subset).
\end{itemize}
\end{rmk}

In this section we prove the analogue of \cite[Theorem 3]{eh92} in arbitrary dimension.
\begin{thrm} \label{chowdimpn}
Let $\alpha$ denote the class of the $k$-plane on $\mathbb{P}^{n}$.  Then for $d>1$ the dimension of $\Chow(\mathbb{P}^{n},d \alpha)$ is
\begin{align*}
\max \left\{ d(k+1)(n-k), \left( \begin{array}{c} d+k+1 \\ k+1 \end{array} \right) - 1 + (k+2)(n-k-1) \right\}.
\end{align*}
\end{thrm}
Again, the first number is the dimension of the space of a union of $d$ $k$-planes on $\mathbb{P}^{n}$ and the second is the dimension of the space of degree $d$ hypersurfaces in $(k+1)$-planes.

The basic tool in \cite{eh92} is Castelnuovo theory for curves.  It is unclear how best to formulate an analogue in higher dimension.  Thus we take an alternative approach: the strategy is to reduce the general case to the calculation for curves using hyperplane sections.  By induction, one can find a bound on the dimension of the intersection family.  The main step is to estimate the dimension of the ``kernel'', in other words, the dimension of a family of $k$-cycles which has fixed intersection with a hyperplane.  The following lemmas build up to such a statement.

\begin{lem} \label{irreducibilitylem}
Let $R$ be a normal integral projective variety of dimension $n \geq 1$.  Suppose that the Cartier divisor $A$ on $R$ is the pullback of a very ample divisor under a birational map and set $d = A^{n}$.  Define
\begin{equation*}
S = \mathbb{P}_{R}(\mathcal{O}_{R} \oplus \mathcal{O}_{R}(A))
\end{equation*}
and $\pi: S \to R$ the projection.  Set $L$ to be a Cartier divisor representing the relative dualizing sheaf $\mathcal{O}_{S/R}(1)$ and $E$ to be the effective Cartier divisor corresponding to the unique section of $\mathcal{O}_{S/R}(1) \otimes \pi^{*}\mathcal{O}_{R}(-A)$.  Then any irreducible effective Weil divisor $D$ satisfying $L^{n} \cdot D = d$ and $E \cdot D = 0$ must lie in $|L|$ (and in particular must be Cartier).
\end{lem}

\begin{proof} Note that for any rank one reflexive sheaf $\mathcal{L}$ on $R$ we have that $\pi^{*}\mathcal{L}$ is still reflexive.  Furthermore, $\pi_{*}\pi^{*}\mathcal{L} \cong \mathcal{L}$, since upon restriction to the smooth locus $U \subset R$ both agree with $\pi_{*}\pi^{*}\mathcal{L}|_{U}$.

Consider the reflexive sheaf $\mathcal{O}_{S}(D)$.  There is a unique integer $q$ and reflexive rank one sheaf $\mathcal{O}_{R}(T)$ such that
\begin{equation*}
\mathcal{O}_{S}(D) \cong \mathcal{O}_{S/R}(1)^{\otimes q} \otimes \pi^{*}\mathcal{O}_{R}(T).
\end{equation*}
(One can verify this over the smooth locus of $R$ using the usual description of the Picard group of a projective bundle.)  The numerical conditions on $D$ imply that $q=1$ and $[T] \in N_{n-1}(R)$ is the $0$ class.  Then by pushing forward we see that
\begin{equation*}
H^{0}(S,\mathcal{O}_{S}(D)) \cong H^{0}(R,\mathcal{O}_{R}(T+A) \oplus \mathcal{O}_{R}(T)).
\end{equation*}
Suppose that $\mathcal{O}_{R}(T)$ is not the trivial bundle.  Then $H^{0}(R,\mathcal{O}_{R}(T)) = 0$ and sections of $\mathcal{O}_{S}(D)$ are constructed by taking a flat pullback of an effective divisor in $H^{0}(R,\mathcal{O}_{R}(T+A))$ and adding on $E$.  Such divisors can not be irreducible.  Thus $T=0$ and we have proved the statement.
\end{proof}

\begin{lem} \label{dimofrestrictedfam}
Let $p: U \to W$ be a family of irreducible degree $d$ effective $k$-cycles on $\mathbb{P}^{n}$ for $1 \leq k < n$.  Suppose there is a fixed reduced subvariety $Z$ of degree $d$ and dimension $k-1$ such that every member of the family contains $Z$.  Then 
\begin{equation*}
\chdim(p) \leq (n-k-1) + \left( \begin{array}{c} d+k \\ k+1 \end{array} \right).
\end{equation*}
\end{lem}

\begin{proof}
The proof is by decreasing induction on the codimension $n-k$.  For the base case, suppose first that $p$ consists of a family of divisors.  Then we find that the family has dimension at most $h^{0}(\mathbb{P}^{n},\mathcal{I}_{Z}(d))$.  We can find an upper bound on this dimension by considering a toric deformation of $Z$ to a degenerate subscheme $Z'$ contained in a hyperplane $H$.  Note that for a general member $Z^{*}$ of this deformation the dimension $h^{0}(\mathbb{P}^{n},\mathcal{I}_{Z^{*}}(d))$ coincides with that for $Z$.  Then by upper semicontinuity
\begin{align*}
h^{0}(\mathbb{P}^{n},\mathcal{I}_{Z}(d)) & \leq h^{0}(\mathbb{P}^{n},\mathcal{I}_{Z'}(d)) \\
&  = \dim \ker(H^{0}(\mathbb{P}^{n},\mathcal{O}(d)) \to H^{0}(H,\mathcal{O}_{H}(d)))
\end{align*}
and we obtain the desired inequality for $n - k=1$.  

Now suppose $n-k>1$.  For simplicity, we may assume that $W$ is actually a subvariety of $\Chow(\mathbb{P}^{n})$.  Consider the projection away from a general point in $\mathbb{P}^{n}$ and let $\phi: P \to \mathbb{P}^{n}$ and $\pi: P \to \mathbb{P}^{n-1}$ be a resolution.  We may assume a general element of $p$ does not contain the center of the projection.  Note that the pushforward family of $p$ again satisfies the hypotheses of the theorem as a family on $\mathbb{P}^{n-1}$.  Thus we seek an upper bound on $\dim(F)$ where $F$ is a general fiber of the rational map $\ch_{\pi_{*}p}: W \dashrightarrow \Chow(\mathbb{P}^{n-1})$.

Fix a general cycle $T$ in our family so that $\pi(T)$ is irreducible of degree $d$ and $F$ is the fiber corresponding to the point $[T] \in \Chow(\mathbb{P}^{n-1})$.  Let $\nu: R \to \pi(T)$ be the normalization of $\pi(T)$, and consider the bundle
\begin{equation*}
S := \mathcal{O}_{R} \oplus  \nu^{*}\mathcal{O}(1).
\end{equation*}
$S$ naturally maps to $\pi^{-1}\pi(T)$, and hence also maps naturally $\psi: S \to \mathbb{P}^{n}$.  We let $L = \psi^{*}H|_{S}$ for a general hyperplane $H$ and let $E$ denote the pull-back to $S$ of the exceptional divisor for $\phi$.  Let $p'$ denote the strict transform to $S$ of the subfamily of $p$ defined by $F$.  First note that any element $D$ of $p'$ satisfies $L^{k} \cdot D = d$ and $E \cdot D = 0$.  Thus any element of $p'$ must actually lie in $|L|$ by Lemma \ref{irreducibilitylem}.

Now fix a general element $D$ of $p'$.  By degree considerations, the restriction of every element of $p'$ to $D$ coincides with $Z$.  In other words, $p'$ lies in a fiber of the restriction map $|L| \to |L|_{D}|$.  Taking sections of the short exact sequence
\begin{equation*}
0 \to \mathcal{O}_{S} \to \mathcal{O}_{S}(L) \to \mathcal{O}_{D}(L) \to 0
\end{equation*}
we see that $\chdim(p') \leq 1$.  Then, arguing by induction we conclude
\begin{equation*}
\chdim(p) \leq \chdim(\pi_{*}p) + 1 \leq (n-k-2) + \left( \begin{array}{c} d+k \\ k+1 \end{array} \right) + 1.
\end{equation*}
\end{proof}

\begin{thrm}
Let $p: U \to W$ be an irreducible reduced family of degree $d$ effective $k$-cycles on $\mathbb{P}^{n}$.  Then
\begin{equation*}
\chdim(p) \leq \left( \begin{array}{c} d+k+1 \\ k+1 \end{array} \right) - 1 + (k+2)(n-k-1).
\end{equation*}
\end{thrm}

\begin{proof}
The proof is by induction on $k$.  When $k=1$, we have a family of curves on $\mathbb{P}^{n}$ and the statement is proved by \cite{eh92}.

For arbitrary $k$, fix a general hyperplane $H$.  Consider the intersection family $p \cdot H$.  By Lemma \ref{dimensionlemma}, we have
\begin{equation*}
\chdim_{\mathbb{P}^{n}}(p) \leq \chdim_{\mathbb{P}^{n}}(p|_{F}) + \chdim_{H}(p \cdot H)
\end{equation*}
where $F$ is a general fiber of $\ch_{p \cdot H}: W \dashrightarrow \Chow(H)$.  However, Lemma \ref{dimofrestrictedfam} shows that
\begin{equation*}
\chdim_{\mathbb{P}^{n}}(p|_{F}) \leq (n-k-1) + \left( \begin{array}{c} d+k \\ k+1 \end{array} \right)
\end{equation*}
and since $p \cdot H$ is a family of irreducible $(k-1)$-cycles in $\mathbb{P}^{n-1}$, by induction
\begin{equation*}
\chdim_{H}(p \cdot H) \leq \left( \begin{array}{c} d+k \\ k \end{array} \right) - 1 + (k+1)(n-k-1).
\end{equation*}
Combining these two bounds gives the result.
\end{proof}

\begin{proof}[Proof of Theorem \ref{chowdimpn}:]
Note that as $\lambda := (a_{1},\ldots,a_{q})$ varies over all partitions of $d$, we have
\begin{equation*}
\dim \Chow(\mathbb{P}^{n},d \alpha) = \sup_{\lambda} \left\{ \sum_{i=1}^{q} \dim \Chow_{irr}(\mathbb{P}^{n},a_{i} \alpha) \right\}
\end{equation*}
where $\Chow_{irr}$ denotes the components of $\Chow$ parametrizing irreducible reduced subvarieties.

For simplicity, we define the functions $f(d) = d(k+1)(n-k)$ and
\begin{equation*}
g(d) =  \left( \begin{array}{c} d+k+1 \\ k+1 \end{array} \right) - 1 + (k+2)(n-k-1).
\end{equation*}
Let $r$ be the largest integer such that $f(r) \geq g(r)$.  For $d \leq r$, using the expression above one sees immediately that the dimension of $\Chow(\mathbb{P}^{n},d\alpha)$ is $d(k+1)(n-k)$.

Now suppose $d>r$.  By induction on $d$ to show the theorem it suffices to prove that
\begin{equation*}
g(d) \geq \sup_{1 \leq q \leq d-1} \{ g(q) + g(d-q)\} \qquad \textrm{and} \qquad g(d) \geq g(d-1) + f(1).
\end{equation*}
(Note that it suffices to restrict our attention to $f(1)$ since by induction for any $d'<d$ a degree $d'$ family whose dimension agrees with $f(d')$ can be replaced by the sum of a degree $d'-1$ family and a degree $1$ family.)  Since $d>r$ we must have
\begin{equation*}
\left( \begin{array}{c} d+k \\ k \end{array} \right) \geq  \left( \begin{array}{c} r+1+k \\ k \end{array} \right).\end{equation*}
Since we must have $f(r+1)-f(r) > g(r+1)-g(r) = (k+1)(n-k)$, the inequality $g(d) \geq g(d-1)+f(1)$ follows from the previous equation.  In fact, this completes the proof for $d \leq 2r$.

For $d > 2r$, we still need to check the other inequality $g(d) \geq g(q) + g(d-q)$ where $1 \leq q \leq d-1$.  In this degree range we may assume without loss of generality that $d-q > r$.  Then note that
\begin{align*}
\left( \begin{array}{c} d+k+1 \\ k+1 \end{array} \right) & - \left( \begin{array}{c} d-q+k+1 \\ k+1 \end{array} \right) - \left( \begin{array}{c} q+k+1 \\ k+1 \end{array} \right)  \\
& = \sum_{i=0}^{q} \left( \left( \begin{array}{c} d-q+k+i \\ k \end{array} \right) - \left( \begin{array}{c} k+i \\ k \end{array}  \right) \right) \\
& \geq (q+1) \left( \left( \begin{array}{c} d-q+k \\ k \end{array}  \right) - 1 \right) \\
& \geq (q+1) \left( \left( \begin{array}{c} r+1+k \\ k \end{array}  \right) - 1 \right) \\
& \geq (q+1)((k+1)(n-k) - 1) \\
& \geq 2(k+1)(n-k) - 2 \\
& > (k+2)(n-k-1) - 1
\end{align*}
and the conclusion follows.
\end{proof}

\section{The variation function} \label{variationsection}

The variation of a class $\alpha \in N_{k}(X)_{\mathbb{Z}}$ measures the rate of growth of the dimensions of components of $\Chow(X)$ that represent $m\alpha$ as $m$ increases.  The main theorem in this section is Theorem \ref{variationandbigness} which shows that variation is in some sense a measure of bigness  along subvarieties of $X$.

\subsection{Dimensions of families of cycles}

Before defining the variation, we need to find bounds for the dimension of components of $\Chow(X)$.  The following theorem incorporates a suggestion of Voisin who pointed out that the coefficient in the original version could be improved by considering a generically finite map to projective space.

\begin{thrm} \label{familydim}
Let $X$ be an equidimensional projective variety of dimension $n$ and let $\alpha \in N_{k}(X)_{\mathbb{Z}}$.  Suppose that $A$ is a very ample divisor on $X$ and set $d = \alpha \cdot A^{k}$.  Then we have
\begin{equation*}
\chdim(\alpha) \leq  \left( \begin{array}{c} d+k+1 \\ k+1 \end{array} \right) + d(k+1)(n-k).
\end{equation*}
\end{thrm}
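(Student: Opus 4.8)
The plan is to reduce the statement to a computation on $\mathbb{P}^n$ and then run an induction on the ambient dimension whose base case \emph{and} whose fiber estimate are both supplied by the divisor bound Proposition \ref{basicvarestimate}(1). The guiding intuition (cf.\ Example \ref{curvesinpn}) is that the largest families of $k$-cycles of degree $d$ are degree-$d$ hypersurfaces inside $(k+1)$-planes, and that projecting from a point decomposes an arbitrary such family into a moving $(k+1)$-dimensional cone together with a family of divisors on it.

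First I would reduce to projective space. Since $A$ is very ample, a general linear projection yields a finite morphism $\pi\colon X \to \mathbb{P}^n$ with $\pi^{*}\mathcal{O}_{\mathbb{P}^n}(1) = A$; being finite, $\pi$ contracts nothing, so Lemma \ref{genfinitepushlem} gives $\chdim(p) = \chdim(\pi_{*}p)$ for every family $p$ representing $\alpha$, and the pushed-forward class is $d[\mathbb{P}^k]$ because $\pi_{*}\alpha \cdot \mathcal{O}(1)^{k} = \alpha \cdot A^{k} = d$. Hence it suffices to prove $\chdim_{\mathbb{P}^m}(d[\mathbb{P}^k]) < (m-k)\binom{d+k+1}{k+1}$ by induction on $m \ge k+1$. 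The base case $m = k+1$ is immediate: here $k$-cycles are hypersurfaces, and Proposition \ref{basicvarestimate}(1) applied to $\mathbb{P}^{k+1}$ gives $\chdim < \binom{d+k+1}{k+1}$.

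For the inductive step I would fix a family $p\colon U \to W$ of $k$-cycles of degree $d$ on $\mathbb{P}^m$, taking $W$ normal (via Lemma \ref{goodfamilymodification}) so that $\ch_p$ is a morphism, and project from a general point $q$. Over the open locus of $W$ where the generic cycle avoids $q$, Construction \ref{pushforwardoffamilies} produces the pushforward family $(\pi_q)_{*}p$ with Chow map $\ch'\colon W \dashrightarrow \Chow(\mathbb{P}^{m-1})$. Applying Lemma \ref{dimensionlemma} with $f = \ch'$ and $g = \ch_p$ yields $\chdim(p) \le \dim\overline{\ch_p(F)} + \chdim((\pi_q)_{*}p)$, where $F$ is a general fiber of $\ch'$. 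The image term is handled by the inductive hypothesis: the cycles $(\pi_q)_{*}Z_w$ have degree $d' \le d$, so $\chdim((\pi_q)_{*}p) < (m-1-k)\binom{d'+k+1}{k+1} \le (m-1-k)\binom{d+k+1}{k+1}$. For the fiber term, a general $F$ consists of those $w$ for which $(\pi_q)_{*}Z_w$ has a fixed support $Z'$ (Lemma \ref{injectivechowlemma}); each such $Z_w$ is then contained in the cone $C = \overline{\pi_q^{-1}(Z')}$, which is equidimensional of dimension $k+1$. Thus the restricted family (Construction \ref{restrictionfamilies}) is a family of divisors on $C$ of degree $Z_w \cdot \mathcal{O}(1)^{k} = d$, and Proposition \ref{basicvarestimate}(1) applied to $C$ with $A = \mathcal{O}(1)|_C$ gives $\dim\overline{\ch_p(F)} < \binom{d+k+1}{k+1}$ (again identifying the Chow dimension on $C$ with that on $\mathbb{P}^m$ via Lemma \ref{injectivechowlemma}). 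Adding the two bounds closes the induction: $\chdim(p) < (m-k)\binom{d+k+1}{k+1}$.

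The conceptual heart — and the step I would write most carefully — is the fiber analysis, which says that projection from a point fibers $\Chow$ of $k$-cycles over $\Chow$ of their projections, with fibers exactly the families of divisors on $(k+1)$-dimensional cones. Two technical points need attention. First, for small codimension the projection need not be birational on $Z_w$, so the projected degree $d'$ may drop; this is harmless, since the fiber estimate depends only on $d = Z_w \cdot \mathcal{O}(1)^k$, and the image estimate only improves when $d' < d$. Second, the cones $C$ are generally singular and non-normal, but Proposition \ref{basicvarestimate}(1) was established for arbitrary equidimensional varieties, so it applies verbatim; phrasing everything through families rather than Chow components (and leaning on Lemma \ref{injectivechowlemma}, Lemma \ref{genfinitepushlem}, and Lemma \ref{dimensionlemma}) is what keeps the argument valid in characteristic $p$.
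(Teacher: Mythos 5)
Your proposal is correct and follows essentially the same route as the paper's proof: reduce to $\mathbb{P}^{n}$ by pushing forward along a linear projection (Lemma \ref{genfinitepushlem}), then induct on the codimension by projecting from a general point, using Lemma \ref{dimensionlemma} to split $\chdim$ into an image term handled by the inductive hypothesis and a fiber term, which is a family of divisors on a $(k+1)$-dimensional variety bounded by Proposition \ref{basicvarestimate}(1). The only cosmetic differences are that the paper first reduces to irreducible, reduced, dominant families (Lemmas \ref{genreduceddimest} and \ref{componentsdimest}) and carries out the projection on the blow-up of $\mathbb{P}^{n}$ at the point, applying the divisor bound to $g^{-1}(V)$ where you apply it to the cone $\overline{\pi_{q}^{-1}(Z')}$.
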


\begin{proof}
Suppose that $p: U \to W$ is a family of effective divisors representing $\alpha$.  Since the desired upper bound is superadditive in $d$, by Lemma \ref{componentsdimest} we may prove the bound for each irreducible component of $U$ separately.  Hence we assume that $p$ is an irreducible family.

Let $\pi: X \to \mathbb{P}^{n}$ be a generically finite morphism defined by a general subspace of $|A|$.  There are two possibilities:
\begin{enumerate}
\item The general member of the family $p$ is contracted by $\pi$.  Then every member of $p$ must be contained in the $\pi$-exceptional locus of $X$, and we conclude by induction on the dimension $n$.
\item The general member of $p$ is not contracted by $\pi$.  By Lemma \ref{genfinitepushlem} we see that $\chdim(p) = \chdim(\pi_{*}p)$.  Thus we obtain the upper bound by Theorem \ref{chowdimpn}.
\end{enumerate}
\end{proof}

Theorem \ref{familydim} shows that for any class $\alpha \in N_{k}(X)_{\mathbb{Z}}$, there is some positive constant $C$ such that $\chdim(m\alpha) < Cm^{k+1}$.  Furthermore, this order of growth can always be achieved by some class on $X$ as in the following example.

\begin{exmple} \label{bigvariationexample}
Let $H_{1},\ldots,H_{n-k}$ be general very ample divisors on $X$ and set $\alpha = H_{1} \cdot \ldots \cdot H_{n-k}$.  Let $V$ denote the scheme-theoretic intersection $H_{1} \cap \ldots \cap H_{n-k-1}$.  The linear series $|m(H_{n-k}|_{V})|$ defines a rational family of $k$-cycles representing $m\alpha$.  Thus
\begin{equation*}
\rchdim(m\alpha) \geq (H_{1} \cdot \ldots \cdot H_{n-k-1} \cdot H_{n-k}^{k+1})\frac{m^{k+1}}{(k+1)!} + O(m^{k})
\end{equation*}
\end{exmple}

\subsection{Definitions}

Theorem \ref{familydim} and Example \ref{bigvariationexample} suggest that one should compare the growth rate of $\chdim(m\alpha)$ against $m^{k+1}$.  

\begin{defn} \label{variationdefn}
Let $X$ be an integral projective variety.  For any $\alpha \in N_{k}(X)_{\mathbb{Z}}$, we define the variation of $\alpha$ to be
\begin{equation*}
\var(\alpha) := \limsup_{m \to \infty} \frac{\chdim(m\alpha)}{m^{k+1}/(k+1)!}.
\end{equation*}
We define the rational variation of $\alpha$ in the analogous way using $\rchdim(\alpha)$ in place of $\chdim(\alpha)$.
\end{defn}

The choice of the coefficient $(k+1)!$ is justified by Example \ref{cyclesinpn}.

\begin{rmk}
\cite[Corollary 1.6]{nollet97} shows that there are infinitely many components of $\Hilb(\mathbb{P}^{3})$ parametrizing subschemes whose underlying cycle is a double line.  Furthermore the dimensions of these components is unbounded.  Thus there is no analogue of Theorem \ref{familydim} for components of the Hilbert scheme with a fixed underlying cycle class.

Any attempt to formulate an analogue of the variation using the Hilbert scheme will need to either consider only special components of $\Hilb(X)$ or account for all the terms in the Hilbert polynomial.
\end{rmk}

\begin{exmple} \label{cyclesinpn}
Let $\alpha \in N_{k}(\mathbb{P}^{n})$ denote the class of a $k$-dimensional hyperplane.  Theorem \ref{chowdimpn} shows that for sufficiently large degrees $d$, there is a component of $\Chow(\mathbb{P}^{n},d\alpha)$ of maximal dimension parametrizing degree $d$ hypersurfaces in a $(k+1)$-dimensional hyperplane.  Thus we have $\var(\alpha)=1$.
\end{exmple}

\begin{exmple} \label{variationofdivisors}
Let $X$ be a normal integral projective variety of dimension $n$ and suppose that $X$ admits a resolution $\phi: Y \to X$ such that the kernel of $\phi_{*}: N_{n-1}(Y) \to N_{n-1}(X)$ is spanned by $\phi$-exceptional divisors.  (For example $X$ could be smooth or normal $\mathbb{Q}$-factorial over $\mathbb{C}$.)  Then for any Cartier divisor $D$ on $X$ we have $\ratvar([D]) = \var([D]) = \vol(D)$. 

To verify this, note that since $\chdim$ is preserved by passing to strict transform families of divisors we have
\begin{equation*}
\chdim(m[D]) = \max \left\{ \chdim(\beta) \left| \begin{array}{l} \beta \textrm{ is a class on } Y \\ \textrm{with } \phi_{*}\beta = m[D] \end{array} \right. \right\}.
\end{equation*}
Let $L$ denote any divisor in the class $\beta$ attaining this maximum value.  We may write $L \equiv m\phi^{*}D + E$ where $E$ is some $\phi$-exceptional divisor.  Since increasing the coefficients in $E$ can only increase $\chdim$, we may assume that $E$ is effective.  But then
\begin{align*}
h^{0}(Y,\mathcal{O}_{Y}(L)) = h^{0}(Y,\mathcal{O}_{Y}(L - E))
\end{align*}
by the negativity of contraction lemma (see for example \cite[III.5.7 Proposition]{nakayama04}).
Thus
\begin{align*}
h^{0}(X,\mathcal{O}_{X}&(mD))-1 \leq \rchdim(m[D]) \leq \chdim(m[D]) \\
&  \leq \dim \Pic^{0}(Y) + \max_{D' \equiv m\phi^{*}D} h^{0}(Y,\mathcal{O}_{Y}(D')) - 1.
\end{align*}
While the rightmost term may be greater than $h^{0}(X,\mathcal{O}_{X}(mD))-1$, the difference is bounded by a polynomial of degree $n-1$ in $m$ (see the proof of \cite[Proposition 2.2.43]{lazarsfeld04}).  Thus $\ratvar([D])$ and $\var([D])$ agree with the volume.
\end{exmple}

\begin{exmple} \label{0cyclevol}
Suppose that $X$ is an integral projective variety of dimension $n$.  There is an isomorphism $\deg \!: N_{0}(X)_{\mathbb{Z}} \to \mathbb{Z}$.  For $\alpha \in N_{0}(X)_{\mathbb{Z}}$ of positive degree $\chdim(\alpha) = n\deg(\alpha)$ so that
\begin{equation*}
\var(\alpha) = n\deg(\alpha)
\end{equation*}

The behavior of $\ratvar(\alpha)$ is somewhat more subtle.  For simplicity suppose that $\alpha$ is the positive generator of $N_{0}(X)_{\mathbb{Z}}$.  A result of \cite{roitman72} shows that there are non-negative integers $d(X), j(X)$ such that for sufficiently large $m$
\begin{equation*}
\rchdim(m\alpha) \geq m(n - d(X)) - j(X).
\end{equation*} 
This gives a lower bound on $\ratvar$.  However, since $\ratvar$ calculates the maximal variation (and not the ``minimum variation'' as in \cite{roitman72}), it may happen that $\ratvar(\alpha) > n-d(X)$.  At the very least we know that $\ratvar(\alpha)$ is always positive.  In fact, by considering families of points lying on a fixed curve $C \subset X$ we see that $\ratvar(\alpha) \geq 1$.
\end{exmple}

\begin{rmk}
The components of $\Chow(X)$ of maximal dimension tend to parametrize degenerate subvarieties.  To measure the bigness of a class, one should instead only consider the dimensions of components of $\Chow(X)$ that are ``general'' in some sense.  This intuition is captured by the mobility function defined in \cite{lehmann13}; however, it would be interesting to see a formulation using the Chow variety directly.  For curves on $\mathbb{P}^{3}$, \cite{perrin87} conjectures that calculating the dimensions of components of $\Chow(X)$ parametrizing ``general'' curves of degree $d$ -- in the sense that the corresponding cycles are not contained in any hypersurface of degree $<d^{1/2}$ -- will yield the value of the mobility function.
\end{rmk}

\subsection{Basic properties}

We next verify some of the basic properties of the variation.

\begin{lem} \label{rescalingvariation}
Let $X$ be an integral projective variety and let $\alpha \in N_{k}(X)_{\mathbb{Z}}$.  Then for any positive integer $c$ we have $\var(c\alpha) = c^{k+1}\var(\alpha)$ (and similarly for $\ratvar$).
\end{lem}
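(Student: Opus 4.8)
The plan is to deduce the scaling relation directly from the abstract homogeneity criterion of Lemma \ref{lazlemma}, applied to the function $f(m) := \chdim(m\alpha)$ with exponent $k+1$. Unwinding the definitions, if I set $g(r) := \limsup_{m \to \infty} f(mr)/m^{k+1}$, then $\var(\alpha) = (k+1)!\,g(1)$ and $\var(c\alpha) = (k+1)!\,g(c)$, so the identity $g(c \cdot 1) = c^{k+1}g(1)$ supplied by Lemma \ref{lazlemma} is exactly what is needed. The only thing left to check is the hypothesis of that lemma: whenever $f(r) > 0$, one must have $f(r+s) \geq f(s)$ for every $s$.

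To verify this hypothesis, I first observe that $f(r) = \chdim(r\alpha) > 0$ forces $r\alpha$ to be the class of some effective cycle $Z$; indeed $\chdim(r\alpha)$ is by definition a maximum taken over families representing $r\alpha$, and such a family can exist only if $r\alpha$ is effective. If $s\alpha$ is not effective then $f(s) = 0$ and the inequality is automatic. Otherwise, choose a family $q: S \to T$ representing $s\alpha$ with $\chdim(q) = \chdim(s\alpha)$, and form the family sum (Construction \ref{familysumconstr}) of $q$ with the constant family over a point whose single member is $Z$. This produces a family representing $(r+s)\alpha$, and by Lemma \ref{chdimfamilysumlem} its Chow dimension equals $\chdim(q)$ plus the Chow dimension of the constant family, which is $0$. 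Hence $\chdim((r+s)\alpha) \geq \chdim(s\alpha)$, that is $f(r+s) \geq f(s)$, as required.

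The statement for $\ratvar$ is proved the same way, replacing $\chdim$ throughout by $\rchdim$ and restricting to rational families. The only extra point to record is that the family sum of the rational family $q$ with the constant family on $Z$ is again a rational family: since every cycle-theoretic fiber of $q$ lies in a single rational equivalence class, adding the fixed cycle $Z$ keeps all fibers rationally equivalent, so $\rchdim((r+s)\alpha) \geq \rchdim(s\alpha)$ and the same appeal to Lemma \ref{lazlemma} finishes the argument.

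I expect the routine part to be the bookkeeping in translating between $\var$ and the $\limsup$ appearing in Lemma \ref{lazlemma}. The one genuinely delicate point is ensuring that the condition $f(r) > 0$ really does supply an effective representative $Z$ of $r\alpha$ that can be added as a fixed member, and that adding it leaves the Chow dimension unchanged rather than contributing a positive-dimensional moving part; both are handled cleanly by the additivity of Chow dimension under family sums (Lemma \ref{chdimfamilysumlem}) together with the fact that a constant family has Chow dimension zero.
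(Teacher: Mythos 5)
Your proof is correct and follows essentially the same route as the paper: both verify the hypothesis of Lemma \ref{lazlemma} by noting that positive Chow dimension supplies an effective representative, which can be added as a constant cycle via the family sum construction without changing the Chow dimension. Your extra remarks (the explicit translation between $\var$ and the $\limsup$ in Lemma \ref{lazlemma}, and the observation that adding a fixed cycle preserves the rational-family condition) are accurate elaborations of details the paper leaves implicit.
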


\begin{proof}
If $\chdim(\alpha) > 0$ then $\alpha$ is represented by an effective cycle $Z$.  Thus $\chdim(\alpha + \beta) \geq \chdim(\beta)$ for any class $\beta$: if $p$ is a family of effective cycles of class $\beta$ then we can add the constant cycle $Z$ to $p$ (using the family sum construction) to obtain a family representing $\alpha + \beta$ with the same Chow dimension.  We conclude by the following Lemma \ref{lazlemma}.
\end{proof}

\begin{lem}[\cite{lazarsfeld04} Lemma 2.2.38] \label{lazlemma}
Let $f: \mathbb{N} \to \mathbb{R}_{\geq 0}$ be a function.  Suppose that for any $r,s \in \mathbb{N}$ with $f(r) > 0$ we have that $f(r+s) \geq f(s)$.
Then for any $k \in \mathbb{R}_{>0}$ the function $g: \mathbb{N} \to \mathbb{R} \cup \{ \infty \}$ defined by
\begin{equation*}
g(r) := \limsup_{m \to \infty} \frac{f(mr)}{m^{k}}
\end{equation*}
satisfies $g(cr) = c^{k}g(r)$ for any $c,r \in \mathbb{N}$.
\end{lem}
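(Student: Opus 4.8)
The plan is to reduce the homogeneity statement to a one-variable fact about restricting a $\limsup$ to an arithmetic progression. Fix $r$ and write $h(n) := f(nr)$, so that $h : \mathbb{N} \to \mathbb{R}_{\geq 0}$. Substituting $n = mc$ in the definition gives
\begin{equation*}
g(cr) = \limsup_{m \to \infty} \frac{h(mc)}{m^{k}} = c^{k} \limsup_{\substack{n \to \infty \\ c \mid n}} \frac{h(n)}{n^{k}}, \qquad g(r) = \limsup_{n \to \infty} \frac{h(n)}{n^{k}} =: L.
\end{equation*}
Thus the claim $g(cr) = c^{k} g(r)$ is equivalent to the assertion that restricting the $\limsup$ to multiples of $c$ does not change its value. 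One inequality is free: since $\{ n : c \mid n \}$ is a subset of $\mathbb{N}$, the restricted $\limsup$ is at most $L$, giving $g(cr) \leq c^{k} g(r)$ in all cases.

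For the reverse inequality I first record the consequence of the hypothesis for $h$: applying the assumption with $ar$ in place of $r$ and $br$ in place of $s$ shows that whenever $h(a) > 0$ we have $h(a + b) \geq h(b)$ for all $b$. In particular the set $P := \{ n \in \mathbb{N} : h(n) > 0 \}$ is closed under addition. Assuming $L > 0$ and $c \geq 2$ (the cases $L = 0$ and $c = 1$ being trivial), I would choose a sequence $n_{j} \to \infty$ with $h(n_{j})/n_{j}^{k} \to L$; since $L > 0$, eventually $n_{j} \in P$. By the pigeonhole principle I pass to a subsequence along which all $n_{j}$ are congruent to a fixed residue $\rho$ modulo $c$. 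Setting $q := (c-1)n_{1} \in P$, the element $q$ is congruent to $-\rho$ modulo $c$, so each $n_{j} + q$ is divisible by $c$; and since $q \in P$ the inequality above yields $h(n_{j} + q) \geq h(n_{j})$.

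The estimate then closes the argument: because $q$ is fixed while $n_{j} \to \infty$,
\begin{equation*}
\frac{h(n_{j} + q)}{(n_{j} + q)^{k}} \geq \frac{h(n_{j})}{n_{j}^{k}} \left( \frac{n_{j}}{n_{j} + q} \right)^{k} \longrightarrow L,
\end{equation*}
and since the $n_{j} + q$ form a sequence of multiples of $c$ tending to infinity, the restricted $\limsup$ is at least $L$. The same display handles $L = \infty$ verbatim. The main obstacle is precisely the construction of the shift $q$: to preserve the limiting ratio I need $q$ to be bounded (hence fixed), yet to land each $n_{j}$ on a multiple of $c$ I need $q$ to correct the residue. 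Reconciling these two demands is what forces both the pigeonhole reduction to a constant residue and the use of additive closure of $P$ to manufacture an element of $P$ in the single residue class $-\rho \pmod c$.
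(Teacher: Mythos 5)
Your proof is correct, and all the edge cases ($c=1$, $L=0$, $L=\infty$) are genuinely handled by your reductions. For comparison: the paper gives no proof of this lemma at all --- it cites Lemma 2.2.38 of Lazarsfeld's book and remarks that the ``essential content'' of that volume-function argument is the general statement; your argument supplies precisely that content, namely that the $\limsup$ of $h(n)/n^{k}$ along multiples of $c$ equals the full $\limsup$, proved via the additive closure of $P=\{n : h(n)>0\}$, the pigeonhole passage to a fixed residue $\rho$, and the fixed shift $q=(c-1)n_{1}\equiv -\rho \pmod{c}$, which is bounded (so it does not disturb the limiting ratio) yet corrects the residue. This is the same mechanism underlying the cited proof for the volume of divisors, so your write-up can stand as a self-contained substitute for the reference.
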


\begin{rmk}
Although \cite[Lemma 2.2.38]{lazarsfeld04} only explicitly address the volume function, the essential content of the proof is the more general statement above.
\end{rmk}

Lemma \ref{rescalingvariation} allows us to extend the definition of variation to any $\mathbb{Q}$-class by homogeneity.  Thus we obtain a function
\begin{equation*}
\var: N_{k}(X)_{\mathbb{Q}} \to \mathbb{R}_{\geq 0}.
\end{equation*}


\begin{lem} \label{variationincreases}
Let $X$ be an integral projective variety.  Suppose that $\alpha, \beta \in N_{k}(X)_{\mathbb{Q}}$ are classes such that some positive multiple of each is represented by an effective cycle.  Then $\var(\alpha + \beta) \geq \var(\alpha) + \var(\beta)$ (and similarly for $\ratvar$).
\end{lem}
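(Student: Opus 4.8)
The plan is to establish superadditivity of $\var$ (and $\ratvar$) under addition of effective classes by combining families via the family sum construction. The key observation is that taking family sums multiplies out the Chow dimensions additively, exactly as recorded in Lemma \ref{chdimfamilysumlem}.

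First I would set up the relevant families. Since some positive multiple of $\alpha$ and of $\beta$ is effective, after replacing by multiples (which is harmless by the homogeneity established in Lemma \ref{rescalingvariation}) I may assume both $\alpha$ and $\beta$ are themselves represented by effective cycles, so that for each $m$ there exist families of effective cycles $p_m$ representing $m\alpha$ and $q_m$ representing $m\beta$ with $\chdim(p_m) = \chdim(m\alpha)$ and $\chdim(q_m) = \chdim(m\beta)$. The family sum $p_m + q_m$ (Construction \ref{familysumconstr}) is then a family of effective cycles representing $m(\alpha+\beta)$, and by Lemma \ref{chdimfamilysumlem} its Chow dimension is
\begin{equation*}
\chdim(p_m + q_m) = \chdim(p_m) + \chdim(q_m) = \chdim(m\alpha) + \chdim(m\beta).
\end{equation*}
Hence $\chdim(m(\alpha+\beta)) \geq \chdim(m\alpha) + \chdim(m\beta)$.

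Second, I would pass to the limit. Dividing the displayed inequality by $m^{k+1}/(k+1)!$ and taking $\limsup$ as $m \to \infty$ gives the result, provided one is careful that a $\limsup$ of a sum dominates the sum of the individual $\limsup$s when the latter are each achieved. Since the $\limsup$ defining $\var(\alpha)$ and $\var(\beta)$ may be attained along different subsequences, the cleanest argument is to fix a subsequence $m_j$ along which $\chdim(m_j \alpha)/(m_j^{k+1}/(k+1)!) \to \var(\alpha)$; along this subsequence the ratio $\chdim(m_j\beta)/(m_j^{k+1}/(k+1)!)$ has $\limsup$ at least $\var(\beta)$ by definition, and the superadditive bound above then yields $\var(\alpha+\beta) \geq \var(\alpha) + \var(\beta)$. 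This step is where I expect the only genuine subtlety, namely the standard fact that for nonnegative functions $g_m, h_m$ one has $\limsup_m (g_m + h_m) \geq \limsup_m g_m + \liminf_m h_m$; one should either invoke this directly or rephrase using a common subsequence. The rational case is identical, using rational family sums in place of ordinary ones, since the family sum of two rational families is again a rational family representing the sum of the classes.
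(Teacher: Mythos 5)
Your geometric reduction is exactly the paper's proof: rescale so that $\alpha$ and $\beta$ (hence all their multiples) are effective, choose families of maximal Chow dimension representing $m\alpha$ and $m\beta$, form the family sum, and apply Lemma \ref{chdimfamilysumlem} to obtain $\chdim(m(\alpha+\beta)) \geq \chdim(m\alpha) + \chdim(m\beta)$ for every $m$. Up to that displayed inequality your argument and the paper's coincide, and this part is correct (including the remark about $\ratvar$, since a family sum of rational families is again rational).

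The gap is in your passage to the limit, and it is a genuine one. You claim that along a subsequence $m_j$ realizing $\var(\alpha)$, the ratios $\chdim(m_j\beta)/(m_j^{k+1}/(k+1)!)$ have $\limsup$ at least $\var(\beta)$ ``by definition.'' This is backwards: $\var(\beta)$ is the $\limsup$ over \emph{all} $m$, and restricting to a subsequence can only decrease a $\limsup$; nothing rules out that the near-optimal scales for $\alpha$ and for $\beta$ are interleaved and disjoint, so that along your $m_j$ the $\beta$-ratios stay bounded away from $\var(\beta)$. The fallback fact you cite, $\limsup_m(g_m+h_m) \geq \limsup_m g_m + \liminf_m h_m$, is true but yields only $\var(\alpha+\beta) \geq \var(\alpha) + \liminf_m \chdim(m\beta)/(m^{k+1}/(k+1)!)$, which is weaker than the claim unless the $\limsup$ defining $\var(\beta)$ is actually a limit --- something neither you nor the paper establishes. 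Note also that the step cannot be closed by purely formal bookkeeping: monotonicity under adding effective classes, superadditivity $\chdim((m+m')\beta)\geq\chdim(m\beta)+\chdim(m'\beta)$, and the homogeneity of Lemma \ref{rescalingvariation} are all available, yet one can construct nonnegative monotone superadditive sequences with these properties for which $\limsup$-superadditivity of the sum fails; what these properties do give formally is only $\var(\alpha+\beta) \geq \max(\var(\alpha),\var(\beta))$, since pointwise domination survives a $\limsup$. In fairness, the paper's own proof elides this same point, passing from the displayed identity to ``the desired inequality follows''; so your proposal matches the paper wherever the paper gives an argument, and the place where you try to say more is precisely where a real additional idea (not a rephrasing with subsequences) would be needed.
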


\begin{proof}
Note that we may check the inequality after rescaling $\alpha$ and $\beta$ by the same positive integer $c$.  Thus we may suppose that every multiple of $\alpha$ and $\beta$ is represented by an effective cycle.

Suppose that $p: U \to W$ is a family representing $m\alpha$ and $q: S \to T$ is a family representing $m\beta$.  Then the family sum $p+q$ represents $m(\alpha+\beta)$.  Lemma \ref{chdimfamilysumlem} shows that
\begin{equation*}
\chdim(p+q) = \chdim(p) + \chdim(q)
\end{equation*}
and the desired inequality follows.
\end{proof}

By Example \ref{bigvariationexample}, we find:

\begin{cor} \label{bigvarcor}
Let $X$ be an integral projective variety and let $\alpha \in N_{k}(X)_{\mathbb{Q}}$ be a big class.  Then $\var(\alpha) > 0$.
\end{cor}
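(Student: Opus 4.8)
The plan is to show that $\var(\alpha)$ dominates the variation of a complete-intersection class, which is positive by Example~\ref{bigvariationexample}. Fix a very ample divisor $A$ on $X$. Taking $H_{1},\dots,H_{n-k}$ to be general members of $|A|$ in Example~\ref{bigvariationexample}, the underlying numerical class is $[A^{n-k}]$ and the displayed inequality specializes to
\begin{equation*}
\rchdim(m[A^{n-k}]) \geq (A^{n})\frac{m^{k+1}}{(k+1)!} + O(m^{k}),
\end{equation*}
so that $\var([A^{n-k}]) \geq \ratvar([A^{n-k}]) > 0$, using $A^{n} > 0$ and $\rchdim \leq \chdim$.

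Next I would use bigness to compare $\alpha$ with $[A^{n-k}]$. By Lemma~\ref{ampleintisbig} the class $[A^{n-k}]$ lies in $\Eff_{k}(X)$; since $\alpha$ lies in the interior of $\Eff_{k}(X)$, there is a rational number $c > 0$ so small that $\beta := \alpha - c[A^{n-k}]$ is still big. Granting that both $\beta$ and $c[A^{n-k}]$ have positive multiples represented by effective cycles, Lemma~\ref{variationincreases} together with the homogeneity of Lemma~\ref{rescalingvariation} gives
\begin{equation*}
\var(\alpha) = \var(\beta + c[A^{n-k}]) \geq \var(\beta) + c^{k+1}\var([A^{n-k}]) \geq c^{k+1}\var([A^{n-k}]) > 0,
\end{equation*}
which is the desired conclusion.

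The one point requiring care — and the main obstacle — is the hypothesis of Lemma~\ref{variationincreases}, namely that $\beta$ is represented up to a positive multiple by an effective cycle. The class $c[A^{n-k}]$ is visibly effective, being a multiple of a complete-intersection cycle, so the issue is only the effectivity of the big class $\beta$. For this I would record the general fact that every big $\mathbb{Q}$-class is effective in this sense. By definition $\Eff_{k}(X)$ is the closure of the convex cone $K$ generated by the integral classes of effective cycles; since a nonempty convex set and its closure share the same relative interior and $\Eff_{k}(X)$ is full-dimensional, the interior of $\Eff_{k}(X)$ is contained in $K$. Thus $\beta$ is a finite nonnegative real combination of effective integral classes, and because $\beta$ and these generators are rational a standard rationality argument for the resulting system of linear inequalities lets one take the coefficients rational; clearing denominators exhibits a positive multiple of $\beta$ as an effective cycle. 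The same remark applies to $\alpha$ itself, so every effectivity hypothesis used above is met.
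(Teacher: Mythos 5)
Your proof is correct and follows essentially the same route as the paper, whose one-line proof ("By Example \ref{bigvariationexample}") implicitly means exactly this: specialize the complete-intersection example to $[A^{n-k}]$, split $\alpha = \beta + c[A^{n-k}]$ with $\beta$ big, and apply the superadditivity of Lemma \ref{variationincreases} together with homogeneity. Your additional verification that a big class in $N_{k}(X)_{\mathbb{Q}}$ has an effective positive multiple (via the relative-interior and rationality argument) is a correct filling-in of a hypothesis of Lemma \ref{variationincreases} that the paper leaves implicit.
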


As a consequence, we see that $\var$ is a continuous function on the big cone.

\begin{thrm} \label{bigvarcont}
Let $X$ be an integral projective variety.  The function $\var: N_{k}(X)_{\mathbb{Q}} \to \mathbb{R}_{\geq 0}$ is locally uniformly continuous on the interior of $\Eff_{k}(X)_{\mathbb{Q}}$ (and similarly for $\ratvar$).
\end{thrm}

\begin{proof}
$\var$ verifies conditions (1)-(3) of the following Lemma \ref{easyconelem}.
\end{proof}

\begin{lem}[\cite{lehmann13} Lemma 2.8] \label{easyconelem}
Let $V$ be a finite dimensional $\mathbb{Q}$-vector space and let $C \subset V$ be a salient full-dimensional closed convex cone.  Suppose that $f: V \to \mathbb{R}_{\geq 0}$ is a function satisfying
\begin{enumerate}
\item $f(e) > 0$ for any $e \in C^{int}$,
\item there is some constant $c > 0$ so that $f(me) = m^{c}f(e)$ for any $m \in \mathbb{Q}_{>0}$ and $e \in C$, and
\item for every $v \in C$ and $e \in C^{int}$ we have $f(v+e) \geq f(v)$.
\end{enumerate}
Then $f$ is locally uniformly continuous on $C^{int}$.
\end{lem}

The behavior of the variation along the pseudo-effective boundary is more subtle.  Probably the most one can hope for is:

\begin{ques}
Let $X$ be an integral projective variety.  Is the function $\var: \Eff_{k}(X)_{\mathbb{Q}} \to \mathbb{R}_{\geq 0}$ upper semi-continuous?
\end{ques}

By analogy with the volume, the variation should satisfy some form of concavity on the big cone.  The following conjecture is a weak statement in this direction.  It is easy to show that the conjecture would imply the upper semi-continuity of $\var$ as a function on $\Eff_{k}(X)_{\mathbb{Q}}$.

\begin{conj} \label{volconvexconj}
Let $X$ be an integral projective variety.  Suppose that $\alpha,\beta,\gamma \in N_{k}(X)_{\mathbb{Z}}$ are classes with $\alpha$ pseudo-effective and $\beta$ and $\gamma$ big.  Then
\begin{equation*}
\chdim(\alpha + \beta) - \chdim(\alpha) \leq \chdim(\alpha + \beta + \gamma) - \chdim(\alpha + \gamma).
\end{equation*}
\end{conj}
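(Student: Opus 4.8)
The plan is to read the conjecture as a supermodularity (``increasing differences'') statement and to attack it by reducing to the divisor case, where the inequality should be governed by the positivity of mixed intersection numbers. Rearranging, the assertion is equivalent to
\[
\chdim(\alpha+\beta) + \chdim(\alpha+\gamma) \leq \chdim(\alpha+\beta+\gamma) + \chdim(\alpha),
\]
i.e. the function $(x,y) \mapsto \chdim(\alpha + x\beta + y\gamma)$ has nonnegative mixed second difference across the corners of $\{0,1\}^2$. The first point I would record is that the naive tools point the \emph{wrong} way: the family sum (Construction \ref{familysumconstr}) together with Lemma \ref{chdimfamilysumlem} only supplies the superadditivity $\chdim(\mu+\nu) \geq \chdim(\mu)+\chdim(\nu)$ for effective $\mu,\nu$, and substituting this into the right-hand side would instead demand a \emph{sub}additivity of $\chdim(\alpha+\gamma)$. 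So the conjecture cannot be formal; it must encode a genuine positivity, and the model for that positivity is the divisor case.

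Second, I would settle the divisor case ($k=n-1$) as the engine of the argument. Here Example \ref{variationofdivisors} identifies $\chdim$ of a divisor class with $h^{0}$ up to a bounded $\Pic^{0}$-correction, so one is reduced to an inequality of the form $h^{0}(A+B)+h^{0}(A+C) \leq h^{0}(A+B+C)+h^{0}(A)$ for divisors $A,B,C$ representing the classes, with $B,C$ big. The mechanism I would use is multiplication by a section $s \in H^{0}(C)$ with zero divisor $Z \in |C|$: twisting $0 \to \mathcal{O}(-C) \to \mathcal{O} \to \mathcal{O}_{Z} \to 0$ gives $0 \to \mathcal{O}(A') \xrightarrow{\,s\,} \mathcal{O}(A'+C) \to \mathcal{O}(A'+C)|_{Z} \to 0$, so the increment $h^{0}(A'+C)-h^{0}(A')$ equals $h^{0}((A'+C)|_{Z})$ minus an $H^{1}(A')$-correction. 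Comparing $A'=\alpha$ with $A'=\alpha+\beta$ then reduces the statement to the same supermodularity on the lower-dimensional $Z$, setting up an induction on $\dim X$. The heuristic that this restriction preserves the inequality is precisely Khovanskii--Teissier positivity: in the asymptotic (volume) regime the mixed difference computes a mixed intersection number of type $\alpha^{n-2}\cdot\beta\cdot\gamma$, which is nonnegative for nef classes, so the continuous shadow of the conjecture is classical.

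Third, I would bootstrap from divisors to general $k$ using the inductive scheme already present in Theorem \ref{familydim} and Proposition \ref{restricttoample}: blow up and project, fiber the relevant Chow map over a lower-dimensional Chow variety, and control the fiber direction by the divisor estimate Proposition \ref{basicvarestimate}. The aim is to give each of the four terms a fibered description compatible with adjoining $\beta$ and $\gamma$, so that the supermodular inequality descends from the divisor case one codimension at a time.

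The hard part, and the reason this remains a conjecture, is that $\chdim$ is defined as a \emph{maximum} over families, with no canonical optimal family and no a priori compatibility between the optimizers for $\alpha$, $\alpha+\beta$, $\alpha+\gamma$, and $\alpha+\beta+\gamma$. Every operation in the toolkit --- family sum, restriction (Construction \ref{restrictionfamilies}), intersection against divisors (Construction \ref{intersectionfamilyconstr}), and the restriction maps of Proposition \ref{restricttoample} --- yields only one-sided bounds, so the central difficulty is to manufacture, from optimal families for the two mixed classes, a single competing family for $\alpha+\beta+\gamma$ that simultaneously realizes the dimension contributed by $\beta$ and by $\gamma$. Even in the divisor case this forces one to control the $H^{1}$-correction terms uniformly as the base class grows, which is exactly where the bigness of $\beta$ and $\gamma$ together with a vanishing input must enter; I do not see how to manage these corrections in the non-additive, max-over-families setting for cycles of higher codimension, and that is the crux I cannot currently close.
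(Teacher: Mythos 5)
The statement you are attacking is Conjecture \ref{volconvexconj} of the paper: it is posed as an open problem, not proved there. The only thing the paper says about it is that it is a weak concavity statement modeled on the volume function and that it would imply upper semi-continuity of $\var$ on $\Eff_{k}(X)_{\mathbb{Q}}$. So there is no proof in the paper to compare yours against, and your proposal --- which you candidly end by saying you cannot close --- does not prove it either. Your structural observations are sound: reading the inequality as supermodularity is the right framing, and you are correct that Lemma \ref{chdimfamilysumlem} only yields superadditivity of $\chdim$, which points the wrong way, so the conjecture cannot be formal.

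Two of the load-bearing steps in your plan are, however, weaker than you present them. First, even the divisor case is not ``governed by classical positivity.'' The conjecture is a finite-level statement about integral classes, not an asymptotic one; Khovanskii--Teissier inequalities control volumes, i.e.\ the leading asymptotics of $h^{0}(mD)$, and say nothing about the exact supermodularity $h^{0}(A'+B)+h^{0}(A'+C) \leq h^{0}(A'+B+C)+h^{0}(A')$ at a fixed level, which is itself not known and can only be approached through the $H^{1}$ corrections you mention. Moreover, the identification of $\chdim$ with $h^{0}-1$ in Example \ref{variationofdivisors} requires hypotheses on $X$ (a resolution whose pushforward kernel is spanned by exceptional divisors) and holds only up to error terms of size $\dim \Pic^{0}$ and exceptional twists --- harmless asymptotically, but fatal for an exact inequality at level one. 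Second, the bootstrapping via Proposition \ref{restricttoample} requires that the class minus the very ample divisor be non-pseudo-effective, so it cannot be applied uniformly to the four classes $\alpha$, $\alpha+\beta$, $\alpha+\gamma$, $\alpha+\beta+\gamma$ simultaneously. The crux you name --- that $\chdim$ is a maximum over families with no compatibility among the optimizers for the four classes --- is exactly the obstruction, and it is why the statement remains a conjecture; your write-up is best read as an honest and reasonable problem analysis, not a proof.
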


Finally we note that variation behaves well with respect to inclusions of subvarieties.

\begin{lem} \label{variationinclusionlem}
Let $X$ be an integral projective variety and $i: W \to X$ an integral closed subvariety.  For any class $\beta \in N_{k}(W)_{\mathbb{Q}}$ we have $\var(\beta) \leq \var(i_{*}\beta)$ (and similarly for $\ratvar$).
\end{lem}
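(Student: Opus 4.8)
The plan is to show that a rational (or arbitrary) family of effective $k$-cycles on $W$ representing $m\beta$ pushes forward under $i$ to a family on $X$ representing $m\,i_*\beta = i_*(m\beta)$ without losing Chow dimension, so that $\chdim_W(m\beta) \leq \chdim_X(m\,i_*\beta)$ for every $m$; taking the $\limsup$ against $m^{k+1}/(k+1)!$ then yields $\var(\beta) \leq \var(i_*\beta)$, and the same argument with rational families gives the statement for $\ratvar$.

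First I would take a family $p \colon U \to V$ of effective $k$-cycles on $W$ with $\chdim_W(p) = \chdim_W(m\beta)$. Since $i \colon W \to X$ is a closed immersion, every cycle-theoretic fiber of $p$ is an effective $k$-cycle on $W$ whose image under $i$ is literally the same subscheme viewed inside $X$; in particular $i$ is an isomorphism onto its image and contracts nothing. I would apply the pushforward family Construction \ref{pushforwardoffamilies} to obtain $i_*p \colon \tilde U \to V^0$, a family of effective $k$-cycles on $X$. Because the pushforward on cycle-theoretic fibers is compatible with $i$ (as recorded in that construction), the cycles parametrized by $i_*p$ have numerical class $i_*(m\beta)$, so $i_*p$ represents $i_*(m\beta)$. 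For the rational case, I would note that if all the fibers of $p$ lie in a single rational equivalence class on $W$, their images lie in a single rational equivalence class on $X$ (the pushforward $i_* \colon A_k(W) \to A_k(X)$ is well-defined on rational equivalence), so $i_*p$ is again a rational family.

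Next I would compare the Chow dimensions. The hypothesis of Lemma \ref{genfinitepushlem} is exactly that for each component $U_j$ of $U$ the image $\overline{s(U_j)}$ is not contracted by $i$ to something of smaller dimension; since $i$ is a closed immersion it contracts nothing at all, so this hypothesis holds trivially. Lemma \ref{genfinitepushlem} then gives $\chdim_X(i_*p) = \chdim_W(p)$. Combining, $\chdim_W(m\beta) = \chdim_W(p) = \chdim_X(i_*p) \leq \chdim_X(i_*(m\beta))$.

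The argument is essentially routine, and I do not expect a genuine obstacle; the only point requiring a little care is bookkeeping about where the various families are defined — the pushforward construction only produces a family over an open subset $V^0 \subset V$, but shrinking the base does not change the Chow dimension, and Lemma \ref{genfinitepushlem} is already stated so as to absorb this. One should also verify that the identification $i_*(m\beta) = m\,i_*\beta$ is used correctly under the homogeneity extension of $\var$ and $\ratvar$ to $N_k(W)_{\mathbb{Q}}$, but this is immediate from Lemma \ref{rescalingvariation} together with linearity of $i_*$. Passing to the $\limsup$ then completes the proof for both $\var$ and $\ratvar$.
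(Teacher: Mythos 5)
Your proof is correct and takes essentially the same route as the paper: both push the family forward along $i$ and observe that the Chow dimension is unchanged, then take the limsup. The only cosmetic difference is that you invoke Lemma \ref{genfinitepushlem} (whose hypothesis is vacuous for a closed immersion), whereas the paper appeals directly to Lemma \ref{injectivechowlemma}, the statement from which Lemma \ref{genfinitepushlem} was itself deduced.
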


\begin{proof}
Let $p$ be a family of effective cycles on $W$ and consider the push-forward family $q$ on $X$.  Recall that for a general cycle-theoretic fiber $Z$ of $p$ the corresponding cycle in the push-forward family is just $i_{*}Z$; thus Lemma \ref{injectivechowlemma} shows that $\chdim(p) = \chdim(q)$ and the result follows.
\end{proof}

\subsection{Variation, connecting chains, and bigness} \label{connectionsandbignesssection}

Example \ref{p3blowup} shows that a class may have positive variation even when it is not big.  This class is constructed by pushing forward a big class on a subvariety.  In this section we show that every class with positive variation arises in this way.  

The main step in the proof is to develop a criterion for bigness of a class using connecting chains of cycles.  This criterion is modeled on \cite[Theorem 2.4]{8authors} which describes big curve classes via connecting chains.  The correct analogue in higher dimensions should require that the cycles in our chain intersect ``positively'' in some sense.  The next theorem shows that such a statement holds under a very strong positivity condition.

\begin{defn} \label{stronglyampleconnectingdefn}
Let $X$ be an integral projective variety of dimension $n$ and let $p: U \to W$ be a family of effective $k$-cycles.  We say that $p$ is strongly big-connecting if $s: U \to X$ is dominant and there is a big effective divisor $B$ on $X$ such that every $p$-horizontal component of $s^{*}B$ is contracted to a subvariety of $X$ of dimension at most $k-1$.
\end{defn}

The following lemma is in preparation for Theorem \ref{stronglyampleisbig}.

\begin{lem} \label{stramphelp}
Let $p: U \to W$ be a flat map of projective varieties of relative dimension $k$ with $W$ integral.  Let $A$ be a big effective Cartier divisor on $U$.  There is a big effective $k$-cycle $Z$ on $U$ such that every component of $\Supp(Z)$ is contained either in a fiber of $p$ or in a $p$-horizontal component of $A$.
\end{lem}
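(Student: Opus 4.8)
The plan is to build $Z$ separately from the horizontal and vertical parts of $A$ relative to $p$, and then to extract bigness from the generic fibre together with the horizontal spreading. Write $n' = \dim W$, so $\dim U = n'+k$, and decompose $A = A_h + A_v$ as the sum of its $p$-horizontal components (those dominating $W$) and its $p$-vertical components (those mapping into proper closed subsets of $W$). The small cases are immediate: if $k=0$ every point lies in its own fibre so any big effective $0$-cycle works; if $n'=0$ then $U$ is the unique fibre and $Z=[U]$ works; if $n'=1$ then $A$ is itself a big effective $k$-cycle and each vertical component maps to a point, hence lies in a fibre. So I assume $k\geq 1$ and $n'\geq 2$.

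Now fix a very ample divisor $\mathcal{H}$ on $U$ and a very ample divisor $G$ on $W$, and set $N=p^{*}G$. I would define
\[
Z := A_h \cdot \mathcal{H}^{\,n'-1} \;+\; A_v \cdot N^{\,n'-1},
\]
each intersection computed with general members, so that $Z$ is an effective $k$-cycle. For the support condition, every component of $A_h\cdot\mathcal{H}^{\,n'-1}$ lies in a horizontal component of $A$, which is the second alternative. For the vertical part, observe that each component $A_i$ of $A_v$ satisfies $\dim\overline{p(A_i)}=n'-1$ with general $p|_{A_i}$-fibre of dimension $k$: indeed the $p$-fibres have dimension $k$, so $\dim\overline{p(A_i)}\geq (n'+k-1)-k=n'-1$, while $\overline{p(A_i)}\subsetneq W$ forces $\leq n'-1$. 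Cutting $A_i$ by $n'-1$ general members of $|N|=|p^{*}G|$ replaces its image by $\overline{p(A_i)}\cap G_1\cap\cdots\cap G_{n'-1}$, a finite set of points; hence every resulting $k$-dimensional component is a fibre of $p|_{A_i}$ and so is contained in a single fibre of $p$, the first alternative. Note that $Z$ is supported on $\Supp A$, and that $A_v\cdot N^{\,n'-1}$ represents a positive multiple of the class of a general fibre.

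The remaining, and principal, difficulty is to show that $Z$ is big. I would aim to prove $Z \succeq \epsilon\,\mathcal{H}^{\,n'}$ for some $\epsilon>0$; since $\mathcal{H}^{\,n'}\in N_k(U)$ is big by Lemma~\ref{ampleintisbig} and the big cone is stable under adding pseudo-effective classes, this suffices. The two summands contribute complementary positivity: because $A$ restricts to a big divisor on a general fibre $F$ (and $A_v|_F=0$ there), the vertical term $A_v\cdot N^{\,n'-1}$ supplies positivity in the ``pure fibre'' direction, while the ample cuts $A_h\cdot\mathcal{H}^{\,n'-1}$ supply positivity in every direction that spreads over $W$. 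Making this precise is the crux, and I expect it to be the main obstacle. One cannot dominate $\mathcal{H}^{\,n'}$ term by term, nor dominate the big class $A\cdot\mathcal{H}^{\,n'-1}$: replacing the ample cuts of $A_v$ by the nef cuts $N$ strictly decreases the class (the difference $A_v\cdot(\mathcal{H}^{\,n'-1}-N^{\,n'-1})$ is pseudo-effective by Lemma~\ref{intlem}(1) and is subtracted). Instead I would organize $\mathcal{H}^{\,n'}$ by the dimension of the image in $W$ of its supporting cycles and dominate the graded pieces one at a time, the pieces with positive-dimensional image by $A_h\cdot\mathcal{H}^{\,n'-1}$ and the pure-fibre piece by $A_v\cdot N^{\,n'-1}$, carrying out the comparison by a d\'evissage that peels off one pullback hyperplane $N$ at a time and induces on $\dim W$. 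The delicate point is controlling the lower-filtration error terms, that is, ensuring that domination at the graded level lifts to an honest inequality of classes in $N_k(U)$.
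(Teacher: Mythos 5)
Your write-up proves only the support condition, and that condition by itself is nearly vacuous: the fundamental cycle of a single fiber already satisfies it. The entire content of the lemma is the bigness of $Z$, and you explicitly leave that step unproven, calling it ``the crux'' and offering only a heuristic d\'evissage. As you yourself observe, your candidate satisfies $[Z] = [A\cdot\mathcal{H}^{n'-1}] - [A_v\cdot(\mathcal{H}^{n'-1}-N^{n'-1})]$, a big class minus a pseudo-effective one, which by itself proves nothing. The trouble is real: the replacement term $A_v\cdot N^{n'-1}$ is supported on finitely many fibers, so its class lies in the subcone of $\Eff_k(U)$ spanned by the classes of those fiber components that happen to lie in $A_v$; when $\Eff_k(U)$ is large (for instance when fibers are reducible, so that distinct fiber components need not be numerically proportional) there is no evident reason this recovers what was subtracted. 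Moreover the proposed filtration of $\mathcal{H}^{n'}$ ``by the dimension of the image in $W$'' of supporting cycles is not a well-defined decomposition of a class in $N_k(U)$, and even granting one, you concede that domination of graded pieces need not lift to an inequality of classes. So the proposal is not a proof, and it is not clear your specific $Z$ is even big in general.

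The paper avoids this difficulty by inverting the logic: instead of building an explicit cycle and then trying to verify bigness, it starts from a cycle that is big by construction and uses induction (on $\dim W$, with base case $\dim W = 1$, where $Z = A$ works) only to repair the support. Concretely, Lemma \ref{intlem} (3) applied to $\Supp(A)$ produces a big effective $k$-cycle $V$ contained in $\Supp(A)$; the part $V - V'$ lying on $p$-horizontal components of $A$ is already acceptable, and one must only dominate the remainder $V'$, supported on the vertical locus $D$. For this, choose $H$ very ample on $W$ so positive that $p^{*}H - D$ is linearly equivalent to an effective divisor, and apply the induction hypothesis to $p \colon p^{*}H \to H$ and $A|_{p^{*}H}$ to get a big effective cycle $Z'$ on $p^{*}H$ with the required support. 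Bigness of $Z'$ on $p^{*}H$ gives $c[Z'] \succeq [p^{*}H\cdot\tilde{H}^{n-k-1}] \succeq [D\cdot\tilde{H}^{n-k-1}]$ for an ample $\tilde{H}$ on $U$, and the last class dominates a multiple of any effective $k$-cycle supported in $D$ (Lemma \ref{ampleintisbig} applied on $D$), in particular $[V']$. Then $Z = c'Z' + (V-V')$ satisfies $[Z] \succeq [V]$ and is big. Note that your horizontal term $A_h\cdot\mathcal{H}^{n'-1}$ plays no role in this argument; the missing idea is precisely the use of Lemma \ref{intlem} (3) together with the choice of $H$ dominating $D$, which is what lets the induction absorb the vertical part.
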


\begin{proof}
Set $n=\dim U$.  The proof is by induction on the codimension $n-k$.  For the base case $n-k=1$ we can simply take $Z=A$.

For the general case, note that by Lemma \ref{intlem} there is a big effective $k$-cycle $V$ on $U$ contained in $\Supp(A)$.  Let $D$ denote the support of the $p$-vertical components of $A$ and let $V'$ denote the part of $V$ whose support is not contained in any $p$-horizontal component of $A$.

Choose a very ample divisor $H$ on $W$ sufficiently positive so that $p^{*}H - D$ is numerically equivalent to an effective divisor.  For $H$ general, and hence integral, we can apply the induction hypothesis to $p: p^{*}H \to H$ and $A|_{p^{*}H}$ to obtain a big effective $k$-cycle $Z'$ on $p^{*}H$ satisfying the support condition.  In particular, for an ample divisor $\tilde{H}$ on $U$ there is some $c$ sufficiently large so that $c[Z'] \succeq p^{*}H \cdot \tilde{H}^{n-k-1}$.  This shows that some multiple of $[Z']$ will also dominate any effective cycle supported in $D$, so for some $c'$ we have $c'[Z'] \succeq [V']$.   Set $Z = c'Z' + (V-V')$.
\end{proof}

\begin{thrm} \label{stronglyampleisbig}
Let $X$ be an integral projective variety.  A class $\alpha \in N_{k}(X)$ is big if and only if there is some strongly big-connecting family of effective $k$-cycles $p: U \to W$ with class $\beta$ and a positive constant $c$ such that $c\alpha - \beta$ is pseudo-effective.
\end{thrm}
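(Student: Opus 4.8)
The plan is to prove both implications, handling the ``only if'' direction with an explicit model family and reserving the real work for the ``if'' direction. Throughout set $n = \dim X$.

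For the ``only if'' direction, suppose $\alpha$ is big. I would take a very ample divisor $A$ and use the complete-intersection family described after Definition \ref{stronglyampleconnectingdefn}: fix an $(n-k+1)$-dimensional subspace $V \subset H^{0}(X,\mathcal{O}_{X}(A))$ and let $p$ parametrize the intersections of $n-k$ general members of $V$. That family is strongly big-connecting (the intersection of a fixed $A \in |V|$ with a general member lies in the base locus of $V$, of dimension $k-1$) and represents $\beta = [A^{n-k}]$, which is big by Lemma \ref{ampleintisbig}. Since $\alpha$ lies in the interior of $\Eff_{k}(X)$ and $\beta$ is a fixed pseudo-effective class, interiority of $\alpha$ yields a constant $c>0$ with $c\alpha - \beta$ pseudo-effective, as required.

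For the ``if'' direction, I would first reduce to proving that the class $\beta$ of a strongly big-connecting family is itself big: if $c\alpha - \beta$ is pseudo-effective and $\beta$ is big, then $c\alpha = \beta + (c\alpha - \beta)$ is big, whence $\alpha$ is big. To prove $\beta$ is big, I would next arrange that $s: U \to X$ is generically finite by restricting the family to a general complete-intersection subvariety $W' \subset W$ of dimension $n-k$ via Construction \ref{restrictionfamilies}. A dimension count shows the restricted family stays dominant, and since any $p$-vertical component of $s^{*}B$ meets $U' = p^{-1}(W')$ only over a subvariety of $W'$ of dimension $<n-k$, no new horizontal components appear and the family remains strongly big-connecting with the same class $\beta$. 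After replacing this family by a projective flat model over a normal projective base using Lemma \ref{goodfamilymodification}---noting that each $p$-horizontal component of $s^{*}B$ over the new base restricts to a horizontal, hence contracted, component over the original base, so the property persists---I may assume $s$ is generically finite and $p$ is flat and projective.

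Now I would set $A = s^{*}B$, which is a big effective Cartier divisor on $U$ because $B$ is big and $s$ is generically finite, and apply Lemma \ref{stramphelp} to obtain a big effective $k$-cycle $Z = \sum a_{i}Z_{i}$ on $U$ each of whose components is either contained in a fiber of $p$ or in a $p$-horizontal component of $A$. Pushing forward by $s$, the components of the second type are contracted to dimension at most $k-1$ and so contribute nothing to $s_{*}Z$; each component of the first type is a component of a cycle-theoretic fiber, so its image appears in that fiber (of class $\beta$) with positive coefficient and therefore satisfies $[s_{*}Z_{i}] \preceq \beta$. Summing over the finitely many components gives $s_{*}[Z] \preceq N\beta$ with $N = \sum a_{i}$. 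On the other hand $s_{*}[Z]$ is big by Lemma \ref{surjpushforward}, so $N\beta$ dominates a big class and is itself big; hence $\beta$ is big.

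The main obstacle I anticipate is the bookkeeping in the last two paragraphs: verifying that the generic-finiteness reduction genuinely preserves the strongly big-connecting hypothesis (so that the horizontal components of $A$ remain contracted to dimension $\leq k-1$ and die under $s_{*}$), and confirming that the pushforward of each vertical component of $Z$ is actually dominated by $\beta$ in $\Eff_{k}(X)$ rather than merely algebraically equivalent to part of a fiber. Once these points are secured, everything else follows from Lemma \ref{stramphelp} together with the standard facts that pushforward and the relation $\preceq$ preserve bigness.
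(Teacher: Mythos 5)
Your proposal is correct and takes essentially the same route as the paper: the same complete-intersection family for the forward direction, the same reduction to showing that the class of a strongly big-connecting family is itself big, the same generic-finiteness reduction (restricting to a general complete intersection in $W$ and then invoking Lemma \ref{goodfamilymodification}), and the same conclusion via Lemma \ref{stramphelp}, pushforward, and Lemma \ref{surjpushforward}. The additional verifications you supply---dominance after restriction, absence of new horizontal components, persistence of the strongly big-connecting property under the projective model, and the explicit constant $N=\sum a_{i}$---are correct elaborations of steps the paper asserts without comment.
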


\begin{proof}
We first prove the forward implication.  It suffices to construct an example of a strongly big-connecting family of effective $k$-cycles on $X$.  Fix a very ample divisor $A$ on $X$ and an $(n-k+1)$-dimensional subspace $V \subset |A|$.  Consider the family of effective $k$-cycles $p: U \to \mathbb{P}(V^{\vee})$ defined by taking intersections of $(n-k)$ general elements of $V$.  Choose an element $A \in |V|$.  A general cycle in our family $p$ only intersects $A$ along the base locus of $V$; thus any $p$-horizontal component of $A$ must be mapped under $s$ to the base-locus of $V$ which has dimension $k-1$.  So $p$ is a strongly big-connecting family for the divisor $A$.

Conversely, it suffices to show that a strongly big-connecting family $p$ has big class $\beta$.  We may restrict our family $p$ to (an open subset of) a general complete intersection of very ample divisors on $W$ to ensure that $\dim U = \dim X$ without changing the strongly big-connecting property.  We may then modify $p$ as in Lemma \ref{goodfamilymodification} to make $U$ and $W$ projective without changing the strongly big-connecting property. 

Then $s^{*}A$ is a big effective Cartier divisor on $U$.  Lemma \ref{stramphelp} shows that there is a big effective $k$-cycle $Z$ on $U$ whose support is contained in fibers of $p$ and in $p$-horizontal components of $s^{*}A$.  The former components are dominated by the cycle-theoretic fibers of $p$; the latter push forward to $0$.  Thus there is some constant $d$ such that $d\beta - {s}_{*}[Z]$ is a pseudo-effective class.   Since surjective pushforwards preserve bigness by \cite[Corollary 3.22]{fl13}, $s_{*}[Z]$ is a big class on $X$.  Thus we find that $\beta$ is also a big class.    
\end{proof}

\begin{exmple}
Suppose that $X$ is a smooth variety and $D$ is an irreducible divisor.  Theorem \ref{stronglyampleisbig} is similar to the fact that $D$ is big if $D|_{D}$ is ample (see \cite[Lemma 2.3]{voisin10}).
\end{exmple}

\begin{thrm} \label{variationandbigness}
Let $X$ be an integral projective variety and let $\alpha \in N_{k}(X)_{\mathbb{Q}}$.  Then the following conditions are equivalent:
\begin{enumerate}
\item $\ratvar(\alpha)>0$.
\item $\var(\alpha) > 0$.
\item There is an integral $(k+1)$-dimensional projective variety $Y$, a big class $\beta \in N_{k}(Y)_{\mathbb{Q}}$, and a morphism $f: Y \to X$ that is generically finite onto its image such that some multiple of $\alpha - f_{*}\beta$ is represented by an effective cycle.
\end{enumerate}
\end{thrm}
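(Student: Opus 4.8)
The three conditions are proved in the cyclic order $(3)\Rightarrow(1)\Rightarrow(2)\Rightarrow(3)$. The implication $(1)\Rightarrow(2)$ is immediate: every rational family is in particular a family, so $\rchdim\le\chdim$ and hence $\ratvar(\alpha)\le\var(\alpha)$. For $(3)\Rightarrow(1)$, first replace $Y$ by a resolution, which keeps $\beta$ big (bigness of a divisor class is preserved under generically finite pullback) and leaves $f$ factoring through it, so we may assume $Y$ is smooth. Since $\beta$ is then a big divisor class on the $(k+1)$-fold $Y$, Example~\ref{variationofdivisors} gives $\ratvar_{Y}(\beta)=\vol(\beta)>0$. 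Factoring $f$ as $Y\to f(Y)\hookrightarrow X$ and combining Lemma~\ref{genfinitepushlem} (generic finiteness preserves Chow dimension) with Lemma~\ref{variationinclusionlem} (inclusions do not decrease rational variation) yields $\ratvar_{X}(f_{*}\beta)\ge\ratvar_{Y}(\beta)>0$. As $f_{*}\beta$ and $\alpha-f_{*}\beta$ each have an effective multiple, Lemma~\ref{variationincreases} gives $\ratvar(\alpha)\ge\ratvar(f_{*}\beta)>0$.

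The substance is $(2)\Rightarrow(3)$. Because $\var$ is a $\limsup$ and condition (3) asks only for a single subvariety, it suffices to fix one large $m$ with $\chdim(m\alpha)\ge c\,m^{k+1}$ and to extract from a maximal family $p\colon U\to W$ representing $m\alpha$ an integral $(k+1)$-fold $f\colon Y\to X$ that is generically finite onto its image, together with a big class $\beta$ on $Y$ satisfying $f_{*}\beta\preceq c'm\alpha$; rescaling $\beta$ by $1/(c'm)$ then produces (3). Fix an ample $A$ and measure degrees against it. Passing to the reduced family (Lemma~\ref{genreduceddimest}) and invoking Lemma~\ref{componentsdimest}, the irreducible components $p_{i}$, of effective classes $\gamma_{i}\preceq m\alpha$ and degrees $d_{i}=\gamma_{i}\cdot A^{k}$ with $\sum_{i}d_{i}=O(m)$, satisfy $\sum_{i}\chdim(p_{i})\ge c\,m^{k+1}$, while Theorem~\ref{familydim} bounds $\chdim(p_{i})\le C_{0}d_{i}^{\,k+1}$. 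Weighting $\chdim(p_{i})/d_{i}$ by $d_{i}$ shows some index has $\chdim(p_{i})/d_{i}\ge (c/D)m^{k}$; feeding this into the upper bound $\chdim(p_{i})\le C_{0}d_{i}^{\,k+1}$ forces $d_{i}=\Theta(m)$ and hence $\chdim(p_{i})\ge c''m^{k+1}$. Thus I may assume $p$ is irreducible with $s$ dominant onto $V=\overline{s(U)}$, of degree $\Theta(m)$ and Chow dimension $\ge c''m^{k+1}$; by Lemma~\ref{injectivechowlemma} this Chow dimension is unchanged when $p$ is regarded as a family on $V$.

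I then induct on $\dim V$ to produce the $(k+1)$-fold. In the base case $\dim V=k+1$ the family consists of effective divisors on $V$ of class $\delta=\gamma|_{V}$. If $\delta$ were not big, then for a fixed very ample $H$ on $V$ the class $\delta-[H]$ is not pseudo-effective (otherwise $\delta=(\delta-[H])+[H]$ would be a pseudo-effective class plus an ample class, hence big), so Proposition~\ref{basicvarestimate}(2) applies and gives $\chdim_{V}(\delta)=O\!\left((\delta\cdot A^{k-1}\cdot H)^{k}\right)=O(m^{k})$, contradicting $\chdim_{V}(\delta)\ge c''m^{k+1}$. Hence $\delta$ is big; taking $Y$ a resolution of $V$ and $\beta$ the big pullback of $\delta$ gives $f_{*}\beta=\gamma\preceq m\alpha$. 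For $\dim V\ge k+2$ I mimic the proof of Theorem~\ref{familydim}: after a generically finite map to $\mathbb{P}^{\dim V}$ and a general linear projection $g$ with one-dimensional fibers, Lemma~\ref{dimensionlemma} yields $\chdim(p)\le\chdim(g_{*}p)+\chdim(p|_{F})$, where $p|_{F}$ is a family of divisors on a $(k+1)$-fold. One summand is $\ge\tfrac12 c''m^{k+1}$: if it is the second, the base case yields the big class on a $(k+1)$-fold, transported back to $X$ through the generically finite map; if it is the first, the projected family still has degree $\Theta(m)$ and Chow dimension $\ge\tfrac12 c''m^{k+1}$ on a variety of dimension $\dim V-1$, and I recurse. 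Only finitely many steps occur, so the constants remain positive.

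The conceptual core, and the main obstacle, is the base case: a family of effective divisors on a $(k+1)$-fold whose Chow dimension is of order $(\deg)^{k+1}$ must have big class. The decisive input is the dichotomy in Proposition~\ref{basicvarestimate}: restricting a non-big divisor class to a general very ample divisor renders it non-pseudo-effective, which lowers the governing bound from order $(\deg)^{k+1}$ to order $(\deg)^{k}$; one could alternatively route this step through the strongly big-connecting criterion of Theorem~\ref{stronglyampleisbig}. The two subsidiary technical points are the superadditivity-and-averaging reduction to an irreducible family without losing the $m^{k+1}$ growth rate, and the bookkeeping required to carry the extracted $(k+1)$-fold and its big class back to $X$ across the generically finite projections, using that bigness of a divisor class is preserved under generically finite pullback and that Lemma~\ref{genfinitepushlem} preserves Chow dimension.
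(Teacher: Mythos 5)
Your reductions for $(1)\Rightarrow(2)$ and your averaging argument producing an irreducible dominant family with $\chdim \geq c''m^{k+1}$ are sound, and your base case is essentially correct (with the caveat that $H$ should be taken to be $A|_{V}$, so that the bound from Proposition \ref{basicvarestimate}(2) is $O(m^{k})$ with constants uniform in $m$ — remember that $V$ itself depends on $m$). The genuine gap is in the induction step of $(2)\Rightarrow(3)$, in the case where the projected family carries the Chow dimension. The linear projection $g$ has one-dimensional fibers, so after even one such step the family lives on $\mathbb{P}^{\dim V-1}$ and its relation to $X$ is through a correspondence with one-dimensional (after $j$ steps, $j$-dimensional) fibers; these maps are \emph{not} generically finite, contrary to your closing remark about carrying the big class back ``across the generically finite projections.'' When the recursion terminates with a big divisor class on a $(k+1)$-fold $T$ inside some $\mathbb{P}^{\dim V-j}$, the preimage of $T$ in $V\subset X$ has dimension $k+1+j$, so no $(k+1)$-fold generically finite over its image in $X$ is produced — and bigness downstairs carries no content anyway, since on $\mathbb{P}^{k+1}$ every nonzero effective class is big; this is exactly the failure mode of Example \ref{p3blowup} that condition (3) is designed to detect. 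A second error in the same step: bigness cannot be ``transported back'' along a finite map even at the first stage, because the pushforward of a non-big class under a finite map can be big (the ruling class of $\mathbb{P}^{1}\times\mathbb{P}^{1}$ pushes forward to a big class on $\mathbb{P}^{2}$ under the degree-two quotient). What is true is that you may apply your base-case dichotomy \emph{upstairs}, to the restricted family on a component of the finite preimage of the cone over $W'$, since Lemma \ref{genfinitepushlem} preserves its Chow dimension; but that rescues only the case where the fiber summand dominates at the very first step, not the recursion.

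The mechanism your proposal is missing — the paper's actual argument — stays inside $X$ throughout and needs no induction on $\dim V$: intersect the maximal family $p$ representing $m\alpha$ with the very ample divisor $A$ itself (Construction \ref{intersectionfamilyconstr}). By Theorem \ref{familydim}, the resulting family $q$ of $(k-1)$-cycles of class $m\alpha\cdot A$ has Chow dimension $O(m^{k})$, so for $m$ large there is a curve $T$ in the parameter space contracted by $\ch_{q}$ but not by $\ch_{p}$: along $T$ the cycles move while their intersections with $A$ have a fixed $(k-1)$-dimensional support. By Lemma \ref{injectivechowlemma}, some component of the family over $T$ sweeps out a $(k+1)$-fold $Y\subset X$, and on $Y$ it is a strongly big-connecting family of divisors with respect to $A|_{Y}$; Theorem \ref{stronglyampleisbig} then gives bigness of its class, which is visibly dominated by $m\alpha$. (Your $(3)\Rightarrow(1)$ also has repairable defects: $\beta$ is a numerical class of Weil divisors on a possibly singular, non-normal $Y$, so its pullback to a resolution is undefined — one must first replace $\beta$ by a small multiple of an ample Cartier class it dominates — and pushing optimal families across $f$ via Lemma \ref{genfinitepushlem} requires that none of their components be $f$-contracted, which you do not verify. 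The paper's route avoids both: push $\beta$ forward to $f(Y)$ by Lemma \ref{surjpushforward}, note that big classes have positive $\ratvar$ via complete-intersection families, then apply Lemma \ref{variationinclusionlem} and superadditivity.)
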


\begin{rmk}
The proof shows that Theorem \ref{variationandbigness} is also true if the morphism $f$ in (3) is instead required to be a closed immersion.
\end{rmk}

\begin{proof}
The case when $k=0$ is explained in Example \ref{0cyclevol}, so we may assume $k \geq 1$.

$(1) \implies (2)$ is obvious.

We next show $(2) \implies (3)$.  Suppose that $\var(\alpha) > 0$.  We may rescale $\alpha$ so that $\alpha \in N_{k}(X)_{\mathbb{Z}}$ and every positive multiple of $\alpha$ is represented by an effective cycle.  Fix a very ample Cartier divisor $A$, and choose some positive integer $m$ sufficiently large so that
\begin{equation*}
\chdim(m\alpha) >  \left( \begin{array}{c} m\alpha\cdot A^{k}+k \\ k \end{array} \right) + (m\alpha \cdot A^{k})k(n-k+1).
\end{equation*}
Let $p: U \to W$ denote a family of effective $k$-cycles that has maximal Chow dimension among all the families representing $m\alpha$.  Denote the projection map to $X$ by $s: U \to X$.  By replacing $A$ by a linearly equivalent divisor, we may suppose that $A$ that does not contain any component of $s(U)$.

Let $q: R \to W^{0}$ denote the intersection family of $p$ with $A$ as in Construction \ref{intersectionfamilyconstr} (where $W^{0}$ is an appropriately chosen open set of $W$).  The family $q$ has class $\beta := m\alpha \cdot A \in N_{k-1}(X)$.  By Theorem \ref{familydim}, we have
\begin{equation*}
\chdim(\beta)  < \chdim(p).
\end{equation*}
Thus there is a curve $T \subset W^{0}$ through a general point of $W$ that is contracted by $\ch_{q}$ but not by $\ch_{p}$.  Let $q_{T}: R_{T} \to T$ denote the restriction of the family to (an open subset of) $T$.  Using Lemma \ref{goodfamilymodification} we may extend the family $q_{T}$ to a projective closure of $T$; from now on we let $q_{T}$ denote this family over a projective base.

By Lemma \ref{injectivechowlemma} there is some irreducible component $R'$ of $R_{T}$ whose $s$-image in $X$ has dimension $k+1$.  Set $Y = s(R')$ with the reduced induced structure and $f: Y \to X$ the corresponding closed immersion.  By construction every $q_{T}$-horizontal component of $s^{*}A$ on $R'$ has $s$-image of dimension at most $k-1$.  Thus $q_{T}': R' \to T$ defines a strongly big-connecting family of divisors on $Y$ with respect to $A$.  If we set $\beta'$ to be the class on $Y$ of the family $q_{T}'$, then $\beta'$ is a big class on $Y$ by Theorem \ref{stronglyampleisbig}.  Furthermore $m\alpha - f_{*}\beta'$ is the class of an effective cycle.  Setting $\beta = \frac{1}{m}\beta'$ finishes the second implication.

To show $(3) \implies (1)$, suppose that there is a generically finite morphism $f: Y \to X$ from an integral $(k+1)$-dimensional projective variety $Y$ and a big class $\beta \in N_{k}(Y)_{\mathbb{Q}}$ so that some multiple of $\alpha - f_{*}\beta$ is represented by an effective cycle.  Let $V = f(Y)$ with the induced reduced structure, so we have morphisms $f': Y \to V$ and $i: V \subset X$.  Since surjective pushforwards preserve bigness by \cite[Corollary 3.22]{fl13}, $f'_{*}\beta$ is big on $V$ so that $\ratvar(f'_{*}\beta) > 0$.  By Lemma \ref{variationinclusionlem} $\ratvar(i_{*}f'_{*}\beta) > 0$.  Thus $\ratvar(\alpha) > 0$ as well.
\end{proof}

\nocite{*}
\bibliographystyle{amsalpha}
\bibliography{asymchow}

\end{document}